\title{Fixed Point Sets and the Fundamental Group II:  Euler Characteristics}
\author{Sylvain Cappell \\
Courant Institute of Mathematical Sciences, New York University \\
\\
Shmuel Weinberger\thanks{Research was partially supported by NSF grants DMS 0504721 and 0805913} \\
University of Chicago \\
\\
Min Yan\thanks{Research was supported by Hong Kong RGC General Research Fund 16319116.} \\ Hong Kong University of Science and Technology}
\newcommand{\sub}{\subset}
\newcommand{\pa}{\partial}
\newcommand{\mc}{\mathcal}
\newcommand{\bb}{\mathbb}
\newtheorem{theorem}{Theorem}
\newtheorem*{theorem*}{Theorem}
\newtheorem{lemma}[theorem]{Lemma}
\newtheorem*{corollary*}{Corollary}
\theoremstyle{definition}
\newtheorem*{definition*}{Definition}
\newtheorem*{case*}{Case}
\newtheorem*{subcase*}{Subcase}
\newtheorem{remark}{Remark}
\newtheorem{example}{Example}
\numberwithin{equation}{section}
\begin{document}

\maketitle

\begin{abstract}
For a group $G$ of not prime power order, Oliver showed that the obstruction for a finite CW-complex $F$ to be the fixed point set of a contractible finite $G$-CW-complex is the Euler characteristic $\chi(F)$. He also has the similar results for compact Lie group actions. We show that the analogous problem for $F$ to be the fixed point set of a finite $G$-CW-complex of some given homotopy type is still determined by the Euler characteristic. Using trace maps in $K_0$, we also see that there are interesting roles for the fundamental group and the component structure of the fixed point set.
\end{abstract}

\section{Introduction}

The classical problem of what are the possible homotopy types of fixed sets of group actions on contractible complexes was solved by L. Jones \cite{jones} in the case of $p$-groups, and R. Oliver \cite{oliver} for non-$p$-groups. Our aim in this paper is to put the work of Oliver into a more general context, exposing where the fundamental group does and where it does not play a role. The analogous generalization of Jones' work has very different features, which we will discuss in \cite{cwy-jones}.

The following is our original motivation. Suppose $G$ is a finite group acting by isometries on a compact Riemannian manifold $X$ with nonpositive curvature, and the center of the fundamental group $\pi_1X$ is trivial. Then the $G$-action has a fixed point if and only if the induced homomorphism $G\to \text{Out}(\pi_1X)$ lifts to $\text{Aut}(\pi_1X)$. Here $\text{Out}(\pi_1X)$ is the group of outer automorphisms, and $\text{Aut}(\pi_1X)$ is the group of genuine automorphisms. By taking the fundamental group to be based at a fixed point, we get the necessity of the condition. The sufficiency follows by considering the group $\Gamma$ of lifts of $G$ to the universal cover. Since the center of $\pi_1X$ is trivial, the map $\Gamma\to G$ splits. By Cartan's fixed point theorem \cite{bh}, any finite group of isometries of a simply connected non-positively curved manifold has a fixed point.

If the action is not required to be by isometries, would the action still have a fixed point? For a $p$-group acting on a finite $K(\pi,1)$ complex with centerless $\pi$, Smith theory says that any action satisfying the lifting condition must have a fixed point. 

What about actions by a non-$p$-group?

\begin{theorem}\label{aspherical}
For any finite group $G$ of not prime power order, there is a $G$-action on a compact aspherical manifold $X$, with centerless $\pi=\pi_1X$, such that the induced homomorphism $G\to \text{\rm Out}(\pi_1X)$ lifts to $\text{\rm Aut}(\pi_1X)$, and yet the action has no fixed point.
\end{theorem}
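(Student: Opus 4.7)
The plan is to combine Oliver's construction of a fixed-point-free $G$-action on a contractible complex with an equivariant version of Davis's reflection group trick to produce a closed aspherical $G$-manifold.

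First, since $G$ has non-prime-power order, Oliver's theorem produces a finite contractible $G$-CW-complex $Y_0$ with $Y_0^G=\emptyset$. Equivariantly smoothing and thickening---embedding $Y_0$ in a large orthogonal $G$-representation and taking a closed regular neighborhood---yields a compact contractible smooth $G$-manifold $Y$ with boundary, still satisfying $Y^G=\emptyset$. Next, fix a closed aspherical manifold $M$ with centerless fundamental group (for instance, any closed hyperbolic manifold of dimension at least two), equipped with the trivial $G$-action. Set $N=Y\times M$: a compact aspherical manifold with boundary carrying the diagonal $G$-action, with $N^G=Y^G\times M^G=\emptyset$, $\pi_1N\cong\pi_1M$ centerless, and $G$ acting trivially (hence honestly) on $\pi_1N$.

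To obtain a boundaryless closed manifold I apply an equivariant Davis reflection group trick to $(N,\pa N)$: choose a $G$-invariant panel structure on $\pa N=\pa Y\times M$ whose nerve $L$ is a flag triangulation of a sphere and on which $G$ acts trivially. Because $G$ is trivial on $M$, such a structure can be built from a flag triangulation of $M$, taking panels of the form $\pa Y\times\sigma$ indexed by simplices $\sigma$ (possibly after stabilizing $N$ by a disk factor to arrange the correct ambient dimension for Davis's manifold criterion). Define $X=\mc{U}(W_L,N)$, where $W_L$ is the associated right-angled Coxeter group; then $X$ is a closed aspherical $G$-manifold. The required properties follow: (i)~$X^G=\mc{U}(W_L,N^G)=\emptyset$, since $G$ acts trivially on $W_L$; (ii)~by Davis's computation, $\pi_1X$ is built from $\pi_1M$ and (the commutator subgroup of) $W_L$, and for generic flag $L$ it is centerless; (iii)~the $G$-action on $\pi_1X$ is trivial, so $G\to\text{Out}(\pi_1X)$ lifts trivially to $\text{Aut}(\pi_1X)$.

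I expect the main obstacle to be arranging the mirror structure on $\pa N$ so that it simultaneously satisfies the flag condition (asphericity of $\mc{U}(W_L,N)$), the sphere condition on $L$ (so that $\mc{U}(W_L,N)$ is a closed manifold), and the $G$-triviality of $L$ (so the induced action on $\pi_1X$ remains honest). Each is standard within Davis's framework, but combining them---in the presence of the fixed-point-free $G$-action on the $Y$-factor---requires careful choice of triangulations and possibly a sphere- or disk-stabilization of $N$.
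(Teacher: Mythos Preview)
Your three desiderata on the panel structure are in direct tension, and this is not a technicality that stabilization fixes. If the nerve $L$ is to be a flag triangulation of a sphere in the usual Davis sense, then $L$ encodes a triangulation of $\partial N$, and asking $G$ to act trivially on $L$ forces $G$ to fix the vertices of that triangulation---impossible since $(\partial N)^G=\emptyset$. Your proposed escape, taking panels $\partial Y\times\sigma$ indexed by a triangulation of $M$, makes the nerve a flag triangulation of $M$ rather than of a sphere; since $M$ is aspherical and not a sphere, you are now outside the standard Davis framework, and you give no argument that $\mathcal U(W_L,N)$ is a manifold, that it is aspherical, or that its fundamental group is centerless. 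You correctly flag this as the main obstacle, but leave it unresolved; as written, the proof does not go through. (It is worth noting that the detour through ``$G$ acts trivially on $\pi_1X$'' is unnecessary: the lifting to $\mathrm{Aut}$ only requires that the $G$-action on the base lift to the universal cover, not that it be trivial there.)

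The paper's proof sidesteps this entirely by changing the input. Instead of Oliver's contractible complex, it uses Lemma~\ref{circle} to produce a fixed-point-free finite $G$-complex $X$ together with a pseudo-equivalence $f\colon X\to S^1$ (trivial $G$-action on $S^1$). After thickening $X$ to a manifold with boundary, one forms the mapping cylinder $C$ of $f$; the inclusion $X\hookrightarrow C$ is a pseudo-equivalence and $C$ contains the fixed circle. Now apply the \emph{standard} equivariant Davis construction to $X$ (using any $G$-equivariant flag triangulation of $\partial X$, with no triviality requirement) and, with the same mirror data, to $C$. The resulting $G$-map from the closed aspherical manifold $\mathrm{Davis}(X)$ to $\mathrm{Davis}(C)$ is a $\pi_1$-isomorphism; since $C$ has $G$-fixed points, so does $\mathrm{Davis}(C)$, and the action on $\pi_1$ therefore lifts to $\mathrm{Aut}$. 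Fixed-point-freeness of $\mathrm{Davis}(X)$ is immediate from $X^G=\emptyset$. No product with an auxiliary aspherical $M$ and no nonstandard mirror structure is needed.
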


This result uses the method of Davis \cite{davis}, which promotes constructions involving finite complexes and compact aspherical manifolds with boundary to closed aspherical examples. Consequently, we were led to investigate the role of the fundamental group in generalizing the theory of Oliver. In \cite{cwy-jones}, we will see a contrastingly different role in generalizing the theory of Jones.

It would be reasonable to believe that $\pi_1X$ plays a role in understanding the possible fixed sets of group actions on $X$. In both \cite{jones} and \cite{oliver}, at key points in the construction, an obstruction in $K_0({\bb Z}G)$ arises, and one would expect that for non-simply connected $X$, an analogous element of $K_0({\bb Z}\Gamma)$ arises.
However, in this paper we shall see that the fundamental group does not intervene in this way for non-$p$-groups (though, as we shall see, it does affect the possible trade-offs between Euler characteristics of different components of fixed points). On the other hand, we shall see in \cite{cwy-jones} that this is the case for $p$-groups.

We begin with some definitions. We say that a $G$-map $f\colon X \to Y$ between two {\em finite} $G$-complexes is a {\em pseudo-equivalence} if it is an unequivariant homotopy equivalence. The pseudo-equivalence property is equivalent to that the the induced map $X\times EG\to Y\times EG$ on the Borel constructions is a $G$-homotopy equivalence. Note that $X\times EG\to X$ is a $G$-map and an unequivariant homotopy equivalence. However, the Borel construction is usually not a finite complex, so that the map is not a pseudo-equivalence in the sense of the present paper. 

A pseudo-equivalent $G$-map does not necessarily have an inverse pseudo-equivalent $G$-map. To make pseudo-equivalence into an equivalence relation, therefore, we need to allow two finite $G$-complexes $X,Y$ to be pseudo-equivalent if they are related by a zig-zag sequence of pseudo-equivalent $G$-maps (all $Z_i,W_i$ are finite $G$-complexes)
\[
X\xleftarrow{f_1}
Z_1 \xrightarrow{g_1}
W_1 \xleftarrow{f_2}
Z_2 \xrightarrow{g_2}
W_2 \leftarrow \cdots \to
W_{n-1}\xleftarrow{f_n}
Z_n \xrightarrow{g_n}
Y.
\]
Then for any $p$-subgroup $P$ of $G$, the Smith theory can be applied to the fixed sets of $P$ to give $H_*(X^P;{\bb F}_p)\cong H_*(Y^P;{\bb F}_p)$. In particular, this implies that, if $G$ acts freely on $X$, then the $G$-action on $Y$ must also be free. 

If $G$ is a non-$p$-group, however, it is still possible for $X$ to have no $G$-fixed point, and for $Y$ to have $G$-fixed point. For the case $Y$ is a single point, i.e., $X$ is contractible, Oliver proved the following.

\begin{theorem*}[Oliver]
For any finite group $G$ of not prime power order, there is a number $n_G$, such that a finite complex $F$ is (homotopy equivalent to) the fixed set on a contractible complex, if and only if $\chi(F)=1$ mod $n_G$.
\end{theorem*}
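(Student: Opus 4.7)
The plan is to prove necessity via Smith theory applied to the subgroup lattice, and sufficiency by an explicit construction whose success is controlled by a Wall-type finiteness obstruction in $\tilde K_0(\mathbb{Z}G)$.

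For necessity, suppose $X$ is a contractible finite $G$-CW-complex with $X^G = F$. Smith theory gives that for any $p$-subgroup $P \subseteq G$ and any prime $p$, the fixed set $X^P$ is $\mathbb{F}_p$-acyclic, hence $\chi(X^P) = 1$. The function $(H) \mapsto \chi(X^H)$ on conjugacy classes of subgroups factors through the mark homomorphism from the Burnside ring $A(G)$. Möbius inversion on the subgroup lattice expresses $\chi(X^G) - 1$ as an integer combination of the values $\chi(X^H) - 1$ for proper $H \subsetneq G$, and combining this with the vanishing of $\chi(X^P) - 1$ for all $p$-subgroups $P$ produces a universal integer $n_G$ depending only on the subgroup lattice of $G$ such that $\chi(F) \equiv 1 \pmod{n_G}$.

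For sufficiency, I would start with a finite complex $F$ satisfying $\chi(F) \equiv 1 \pmod{n_G}$ and attach free $G$-cells of various orbit types to build a finite $G$-CW-complex $Y$ with $Y^G = F$ and $Y$ rationally acyclic; this is done by killing the rational homotopy of $F$ cell by cell using equivariant obstruction theory. The result is a finitely dominated complex carrying a Wall finiteness obstruction $\sigma(Y) \in \tilde K_0(\mathbb{Z}G)$, whose image in $\mathbb{Z}$ under the rank/augmentation map is exactly $\chi(F) - 1$. The key identification, due to Oliver, is that $n_G$ coincides with the generator of the image of the Swan homomorphism $\mathrm{sw}\colon (\mathbb{Z}/|G|)^\times \to \tilde K_0(\mathbb{Z}G)$ under the rank map. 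Whenever $\chi(F) - 1$ lies in $n_G\mathbb{Z}$, the obstruction can be annihilated by attaching further equivariant cells corresponding to Swan generators, producing the desired contractible $X$.

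The principal obstacle is the $K$-theoretic matching of the two descriptions of $n_G$: reconciling the combinatorial version arising from Möbius inversion on the subgroup lattice with the $K$-theoretic version arising from the rank image of the Swan homomorphism. This requires Artin/Brauer-style induction from hyperelementary subgroups together with Swan's analysis of projective $\mathbb{Z}G$-modules. The hypothesis that $G$ is not a prime-power group enters precisely at this point, since it is exactly this condition that guarantees nontrivial Swan idempotents and hence a finite $n_G$; without it, $n_G$ effectively collapses and $\chi(F) = 1$ is forced, recovering the behavior seen in Jones' theorem.
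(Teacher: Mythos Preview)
The paper does not contain a proof of this statement. Oliver's theorem is quoted as a background result, attributed to \cite{oliver}, and used as input to the paper's own constructions (notably Lemma~\ref{lem3} and the proof of Theorem~\ref{local}). So there is no proof in the paper against which to compare your proposal.

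Assessing your sketch on its own terms: the overall architecture---Smith theory for necessity, a $\tilde K_0(\mathbb{Z}G)$ obstruction for sufficiency---matches Oliver's original argument, but several of the specifics are inaccurate. For necessity, Oliver does not use M\"obius inversion on the full subgroup lattice. Rather, he considers chains $P\lhd H\lhd G$ with $P$ a $p$-group, $H/P$ cyclic, and $G/H$ a $q$-group; Smith theory applied first to $P$ and then to $G/H$ acting on $X^H$, together with a character-theoretic lemma for cyclic actions, yields $\chi(F)\equiv 1$ modulo $|H/P|$ for each such chain, and $n_G$ is \emph{defined} as the gcd of these $|H/P|$. Your M\"obius-inversion claim would produce only the trivial congruence, since $\chi(X^H)-1$ need not vanish for non-$p$-subgroups $H$.

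For sufficiency, your identification of $n_G$ with ``the generator of the image of the Swan homomorphism under the rank map'' is not correct: the Swan map lands in $\tilde K_0(\mathbb{Z}G)$, and the rank (Hattori--Stallings) map on $\tilde K_0$ vanishes, so this recipe gives $0$, not $n_G$. What Oliver actually does is build a $G$-resolution---a finite $G$-complex $X\supset F$ that is $(\dim X-1)$-connected with $H_{\dim X}(X)$ a projective $\mathbb{Z}G$-module---and show that the set of achievable projective classes is a full coset of a certain subgroup of $\tilde K_0(\mathbb{Z}G)$; the integer $n_G$ then arises from comparing this subgroup with the Swan subgroup via induction from the hyperelementary chains above. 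The matching of the combinatorial and $K$-theoretic descriptions of $n_G$ is indeed the crux, as you say, but it is not mediated by a rank map to $\mathbb{Z}$.
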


By \cite[Theorem 5]{oliver} and the subsequent corollary, we know $n_G=1$, i.e., there is no condition at all on $F$, if and only if $G$ is {\em not} of the form $P\lhd H\lhd G$, with $P$ and $G/H$ having prime power orders, and $H/P$ cyclic. For example, for $G$ a non-solvable group or an abelian group with at least three non-cyclic Sylow subgroups, we have $n_G=1$.

We also know $n_G=0$, i.e., the obstruction is exactly the Euler characteristic, if and only if $G$ has a normal subgroup $P$ of prime power order, such that $G/P$ is cyclic. In particular, a cyclic group has $n_G=0$.

For the complete determination of $n_G$, see \cite{oliver3, oliver4}.

The following is our extension of Oliver's theorem to general $Y$.

\begin{theorem}\label{connected}
Suppose $G$ is a group of not prime power order, and $Y$ is a finite $G$-complex with non-empty and connected fixed set $Y^G$. Then $F$ is the fixed set of a $G$-action on a finite $G$-complex pseudo-equivalent to $Y$ if and only if $\chi(F)=\chi(Y^G)$ {\rm mod} $n_G$.
\end{theorem}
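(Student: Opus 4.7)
The plan is to prove the two directions separately, both leveraging Oliver's theorem.

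\emph{Necessity.} By induction on the length of a zig-zag, it suffices to prove that a single pseudo-equivalent $G$-map $f \colon Z \to W$ between finite $G$-complexes satisfies $\chi(Z^G) \equiv \chi(W^G) \pmod{n_G}$. Form the equivariant mapping cone $C(f) = W \cup_f \mathrm{Cone}(Z)$. This is a finite $G$-complex, and because $f$ is unequivariantly a homotopy equivalence, $C(f)$ is unequivariantly contractible. Since the mapping cone construction commutes with taking $G$-fixed points, $C(f)^G$ is the mapping cone of $f^G \colon Z^G \to W^G$, which has Euler characteristic $\chi(W^G) - \chi(Z^G) + 1$. Applying Oliver's theorem to the contractible finite $G$-complex $C(f)$ gives $\chi(C(f)^G) \equiv 1 \pmod{n_G}$, which yields the desired congruence.

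\emph{Sufficiency.} Let $A = Y^G$. Replacing $F$ by the mapping cylinder of any map $A \to F$, we may assume $A \subset F$ as a subcomplex. The core of the argument is to construct a finite $G$-complex $W$ with $A$ as a trivially-acted subcomplex, such that $W^G \simeq F$ and the inclusion $A \hookrightarrow W$ is an unequivariant homotopy equivalence. Given such a $W$, the pushout $X = Y \cup_A W$ is a finite $G$-complex with $X^G = Y^G \cup_A W^G = F$, and the inclusion $Y \hookrightarrow X$ is an unequivariant homotopy equivalence (the pushout of a homotopy equivalence along a cofibration), yielding the required pseudo-equivalence. To construct $W$, I would start with $A$ (trivial $G$-action), attach the cells of $F \setminus A$ as trivially-fixed $G$-cells so the $G$-fixed set becomes exactly $F$, and then attach equivariant cells of the form $G/H \times D^n$ with $H \subsetneq G$, chosen to kill the relative unequivariant homology $H_*(F, A)$; these proper-orbit cells have empty $G$-fixed set and therefore do not disturb $W^G = F$.

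The main obstacle is this last step: showing that proper equivariant cells sufficient to make $A \hookrightarrow W$ an unequivariant homotopy equivalence can be attached precisely when $\chi(F) - \chi(A) \in n_G \mathbb{Z}$. This is a relative version of Oliver's original $K$-theoretic argument, with the same Swan-type homomorphism $R(G) \to \tilde{K}_0(\mathbb{Z} G)$ controlling which $\chi$-shifts are realizable by proper cells; crucially, this obstruction depends only on the representation theory of $G$ and not on $\pi_1 Y$, which is the point emphasized in the introduction. The connectedness assumption on $Y^G$ provides a canonical basepoint for the mapping cylinder step and keeps the pushout $Y \cup_A W$ well-behaved; the disconnected case is subtler because Euler characteristics can then be traded between components of the fixed set, and the fundamental group does begin to intervene there.
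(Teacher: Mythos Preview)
Your necessity argument via the mapping cone is correct and is exactly what the paper does.

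For sufficiency, the architecture you propose---build a finite $G$-complex $W$ with $A=Y^G\subset W$, $W^G=F$, and $A\hookrightarrow W$ an unequivariant homotopy equivalence, then push out along $A\subset Y$---is sound, but the step you yourself call ``the main obstacle'' is a genuine gap, and it is precisely where the paper's content lies. Oliver's original argument depends essentially on the target being contractible: after attaching proper-isotropy cells to make the complex highly connected, the single surviving homology group is a projective $\mathbb{Z}G$-module whose class is the obstruction. For an arbitrary connected $A$ this does not relativize transparently; ``attach $G/H\times D^n$ to kill $H_*(F,A)$'' ignores $\pi_1$ issues and the finiteness obstruction, and invoking a Swan-type homomorphism is an assertion, not a proof. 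The paper fills this gap by a cell-wise reduction: one first homotopes the map $F\to A$ (Lemma~\ref{lem2}) so that $\chi(f^{-1}(\sigma))\equiv 1\pmod{n_G}$ for every cell $\sigma$ of a regular CW structure on $A$, and then applies the \emph{absolute} Oliver theorem (Lemma~\ref{lem3}) over each cell, building contractible pieces $h^{-1}(\sigma)$ inductively over skeleta. That decomposition is the missing idea.

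Two smaller points. First, your mapping-cylinder step ``choose any map $A\to F$'' presupposes $F\neq\emptyset$; the paper handles the empty case by a separate construction (Section~\ref{empty}) using a mapping torus of a degree-zero self-map of a fixed-point-free representation sphere. Second, your pushout yields a pseudo-equivalence $Y\hookrightarrow X$ rather than a $G$-map $X\to Y$; this suffices for the theorem as stated, but the paper obtains the stronger conclusion of a single pseudo-equivalent $G$-map $X\to Y$ by lifting the link of $Y^G$ through $W$ after a cell-wise join to raise connectivity.
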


\begin{remark}
Given the Euler characteristic condition, we actually construct a finite $G$-complex $X$ and a pseudo-equivalent $G$-map $X\to Y$, such that $F=X^G$. In particular, there is no need to use a zig-zag sequence of maps for the pseudo-equivalence between $X$ and $Y$. 
\end{remark}

In Oliver's work, there is another number $m_G$ which is the largest square-free factor of $n_G$. As shown in \cite[Section 2]{quinn}, this arises in answering the question about $G$-actions which are $G$-ANRs (equivariant absolute neighborhood retracts) but not necessarily homotopy finite\footnote{Recall that for $G$ trivial, according to West's celebrated theorem \cite{west}, any finite dimensional ANR is homotopy equivalent to a finite complex. When $G$ is nontrivial, as Quinn's examples show, this is not true. Moreover, elementary examples show that there is no analogue of Oliver's theorem for general topological actions.}. The same method, combined with the arguments of the present paper, proves the $G$-ANR analogues of all our results, with $m_G$ replacing $n_G$. Such results are relevant to locally linear group actions on topological manifolds. The following is the $G$-ANR analogue of Theorem \ref{connected}. Of course, the statement requires us to slightly extend pseudo-equivalence to the compact $G$-ANR category.

\begin{theorem}\label{connected_anr}
Suppose $G$ is a group of not prime power order, and $Y$ is a compact $G$-ANR with non-empty and connected fixed set $Y^G$. Then a finite complex $F$ is the fixed set of a $G$-action on a compact $G$-ANR pseudo-equivalent to $Y$ if and only if $\chi(F)=\chi(Y^G)$ {\rm mod} $m_G$.
\end{theorem}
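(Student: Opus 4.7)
The plan is to translate the proof of Theorem \ref{connected} into the larger category of compact $G$-ANRs, with Quinn's finiteness-obstruction machinery \cite[Section 2]{quinn} providing the flexibility that replaces the modulus $n_G$ by its largest square-free factor $m_G$ at every stage where a $K_0$-obstruction appears. Since pseudo-equivalence has been extended to the compact $G$-ANR category, we may freely use zig-zag sequences whose intermediate objects are also compact $G$-ANRs.

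For necessity, suppose $X$ is a compact $G$-ANR pseudo-equivalent to $Y$ with $X^G = F$. Then $X$ is $G$-dominated by a finite $G$-CW complex and so carries a Wall-type finiteness obstruction $\sigma(X) \in \tilde{K}_0(\mathbb{Z}G)$, together with analogous obstructions on its isotropy strata. The trace-homomorphism argument used in Theorem \ref{connected} expresses $\chi(F) - \chi(Y^G)$ as the value of a certain trace map $\tau\colon K_0(\mathbb{Z}G) \to \mathbb{Z}$ on a class arising from the pseudo-equivalence; in the finite category the image of $\tau$ lies in $n_G\mathbb{Z}$. For $G$-ANRs one must additionally evaluate $\tau$ on $\sigma(X)$ and the corresponding stratum obstructions, and Quinn's computation shows that the enlarged subgroup of $\mathbb{Z}$ thereby produced is precisely $m_G\mathbb{Z}$, so the residual Euler-characteristic obstruction lies in $\mathbb{Z}/m_G\mathbb{Z}$.

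For sufficiency, assume $\chi(F) \equiv \chi(Y^G) \pmod{m_G}$. The construction used in Theorem \ref{connected} is run step by step, but at each point where one would otherwise have to impose the vanishing of a $\tilde{K}_0$ obstruction in order to stay within the finite $G$-complex category, we instead allow that obstruction to be nonzero and use Quinn's projective-module-to-ANR realization to complete the step within the compact $G$-ANR category. Concretely, one first constructs, via Theorem \ref{connected}, a finite $G$-complex realizing an auxiliary fixed set $F'$ with $\chi(F') \equiv \chi(Y^G) \pmod{n_G}$ that differs from $F$ only in a manner detectable by an element of $\tilde{K}_0$; one then replaces a regular neighborhood of a controlled subcomplex by a compact $G$-ANR of prescribed nontrivial finiteness obstruction. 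Since the achievable shift in the fixed-point Euler characteristic via such ANR exchanges spans exactly $m_G\mathbb{Z}$, the hypothesis allows us to arrange $X^G = F$.

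The main obstacle is the equivariant, stratified version of Quinn's construction: one must realize a prescribed finiteness obstruction by a compact $G$-ANR while simultaneously controlling the topological type of the fixed set and preserving the pseudo-equivalence with $Y$. Once this geometric ingredient is in place, the remainder of the argument is a direct adaptation of the proof of Theorem \ref{connected}, with $n_G$ replaced by $m_G$ and \emph{finite $G$-complex} replaced by \emph{compact $G$-ANR} throughout.
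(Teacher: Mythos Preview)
Your necessity argument misidentifies the mechanism. The paper does not prove the necessity direction of Theorem~\ref{connected} via trace maps; those appear only in Section~\ref{fundgroup} for the component-wise results. Necessity in Theorem~\ref{connected} comes from the mapping cone: if $g\colon X\to Y$ is a pseudo-equivalence, then $cX\cup_g Y$ is contractible with fixed set $cF\cup Y^G$, and Oliver's theorem forces $\chi(F)\equiv\chi(Y^G)$ mod~$n_G$. The $G$-ANR version is the same argument with the contractible object now a compact $G$-ANR, and one invokes the $m_G$ analogue of Oliver's theorem (this is exactly what Quinn's work in \cite[Section~2]{quinn} supplies). Your invocation of a Wall obstruction $\sigma(X)$ and a trace pairing is neither what the paper does nor needed.

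Your sufficiency argument also diverges from the paper and has a real gap. The paper does \emph{not} first build a finite $G$-complex with an auxiliary fixed set $F'$ and then perform an ANR exchange to shift $\chi(F')$ to $\chi(F)$. Instead it reruns the inductive construction in the proof of Theorem~\ref{local} directly with $F$: at the step where one would invoke the second part of Lemma~\ref{lem3} (which needs $\chi\equiv 1$ mod~$n_G$), one instead uses the first part (which needs only $\chi\equiv 1$ mod~$m_G$) to obtain a $G$-resolution whose top homology is a projective ${\mathbb Z}G$-module, and then applies Quinn's ``wrinkling'' trick to kill that projective obstruction within the $G$-ANR category. This is stated explicitly right after the proof in Section~\ref{nonempty}. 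Your alternative route requires you to justify that ``the achievable shift in the fixed-point Euler characteristic via such ANR exchanges spans exactly $m_G{\mathbb Z}$,'' which is precisely the content you are trying to prove; and your phrase ``$F'$ differs from $F$ only in a manner detectable by an element of $\tilde K_0$'' has no clear meaning, since $F$ and $F'$ carry trivial $G$-action and the $\tilde K_0$ obstruction lives on the ambient $G$-complex, not on the fixed set. The ``main obstacle'' you flag at the end is genuine, and you have not resolved it; the paper sidesteps it entirely by localizing the ANR modification to the cell-by-cell step where the obstruction is a single projective module.
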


We say that an action on $X$ is {\em pseudo-trivial}, if there is a pseudo-eqivalence to $X$ with trivial action. This is tantamount to saying that the map $X\to X/G$ splits.  Any action on a contractible complex is pseudo-trivial. 

If we apply Theorem \ref{connected} to the case the action on $Y$ is trivial, we find that $F$ is the fixed point of a pseudo-trivial $G$-complex homotopic to $Y$ if and only if $\chi(F)=\chi(Y)$ mod $n_G$.

The fundamental group does not appear in Theorem \ref{connected}. However, there is an important role for the fundamental group when $Y^G$ is not connected. The fundamental group and the action group interact as follows. As before, let $\Gamma$ be the group of liftings of $G$-actions on $Y$ to the universal cover $\tilde{Y}$. We have an exact sequence
\[
1\to \pi_1 Y \to \Gamma\to G\to 1.
\]
If $\tilde{y}\in \tilde{Y}$ covers $y\in Y$, then we have isomorphism of isotropy groups $\Gamma_{\tilde{y}}\cong G_y$. For $y\in Y^G$, this gives a splitting of the exact sequence. Various choices of $\tilde{y}$ over $y$ give a conjugacy class of splittings, and the conjugacy class depends only on the connected component of $Y^G$ containing $y$. 

To see the interaction of $G$ with $\pi_1Y$ at the level of fixed sets, we introduce the following definition.

\begin{definition*}
A $G$-action on $Y$ is {\em weakly $G$-connected} if each component of $Y^G$ corresponds to a different conjugacy class of lifts of $G$ to $\Gamma$. 
\end{definition*}

Recall that $G$-space $Y$ is said to be {\em $G$-connected}, if $Y^H$ is non-empty and connected for every subgroup $H$ of $G$. Evidently, $G$-connected implies weakly $G$-connected, but the converse is often false. For example, a free action is weakly $G$-connected but not $G$-connected. Another example is actions by isometries on non-positively curved spaces, again by Cartan's fixed point theorem. 

\begin{theorem}\label{component_wise}
Suppose $G$ is a cyclic group or satisfies $n_G=0$, $F$ is a finite complex, and $Y$ is a finite $G$-complex. If $Y$ is weakly $G$-connected, then a $G$-map $f\colon F\to Y$ can be extended to a pseudo-equivalence $g\colon X\to Y$, with finite $G$-complex $X$ satisfying $F=X^G$, if and only if for each connected component $C$ of $Y^G$, we have $\chi(f^{-1}(C))=\chi(C)$ mod $n_G$. 
\end{theorem}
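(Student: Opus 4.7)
The theorem has two halves: necessity, and the more substantive sufficiency.

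\textbf{Necessity.} Suppose $g\colon X\to Y$ is a pseudo-equivalence extending $f$ with $X^G=F$. Being an unequivariant homotopy equivalence, $g$ induces an isomorphism $g_*\colon\pi_1 X\cong\pi_1 Y$, hence an isomorphism of lift groups $\Gamma_X\cong\Gamma_Y$ that preserves conjugacy classes of lifts. Every component $D$ of $X^G$ determines a conjugacy class of lifts to $\tilde X$, and $g(D)$ lies in the component of $Y^G$ whose lift class matches under this isomorphism. Weak $G$-connectedness of $Y$ forces distinct $C_i$'s to have distinct lift classes, so $f^{-1}(C_i)$ equals the union of those components of $X^G$ whose lift class is that of $C_i$. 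For the Euler characteristic constraint, I would localize: pick pairwise disjoint $G$-invariant regular neighborhoods $N_i\subset Y$ of $C_i$ which are $G$-subcomplexes with $N_i^G=C_i$, subdivide $X$ equivariantly so that $X_i:=g^{-1}(N_i)$ is a $G$-subcomplex, and verify that $g|_{X_i}\colon X_i\to N_i$ is itself a pseudo-equivalence of finite $G$-complexes with connected fixed set. The necessity direction of Theorem~\ref{connected} then gives $\chi(f^{-1}(C_i))=\chi(X_i^G)\equiv\chi(C_i)\pmod{n_G}$.

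\textbf{Sufficiency.} The construction proceeds by local modifications of $Y$ near each component of $Y^G$. Fix pairwise disjoint $G$-invariant regular neighborhoods $N_i$ of the $C_i$'s in $Y$, chosen so that $N_i^G=C_i$ and $(\partial N_i)^G=\emptyset$. For each $i$, apply a relative, prescribed-map strengthening of Theorem~\ref{connected} to the pair $(N_i,\partial N_i)$: the hypothesis $\chi(f^{-1}(C_i))\equiv\chi(C_i)\pmod{n_G}$ permits the construction of a finite $G$-complex $X_i$ and a pseudo-equivalence $h_i\colon X_i\to N_i$ with $X_i^G=F_i:=f^{-1}(C_i)$, $h_i|_{F_i}=f|_{F_i}$, and $h_i|_{\partial N_i}=\operatorname{id}$. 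Setting
\[
X:=\Bigl(Y\setminus\bigsqcup_i\operatorname{int}(N_i)\Bigr)\cup_{\bigsqcup_i\partial N_i}\bigsqcup_i X_i,
\]
the maps $h_i$ and the identity on $Y\setminus\bigsqcup_i\operatorname{int}(N_i)$ glue into a $G$-map $g\colon X\to Y$ extending $f$ with $X^G=F$, and $g$ is an unequivariant homotopy equivalence by the gluing lemma.

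\textbf{Main obstacle.} The key technical step is the relative, prescribed-map strengthening of Theorem~\ref{connected}: constructing $h_i\colon X_i\to N_i$ that is (i) the identity on $\partial N_i$ and (ii) realizes the prescribed $f|_{F_i}$ on fixed sets. Condition (i) is tractable because $(\partial N_i)^G=\emptyset$, so Oliver's cell attachments can be carried out in the interior of $N_i$ without disturbing the boundary. Condition (ii) is more delicate when $C_i$ has nontrivial homotopy; a workable device is to first replace $N_i$ by the double mapping cylinder $N_i\cup_{C_i}M(f|_{F_i})$, absorbing $f|_{F_i}$ into the target's structure before invoking Theorem~\ref{connected}. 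A parallel subtlety on the necessity side is verifying that $g|_{X_i}\colon X_i\to N_i$ is a pseudo-equivalence; this is ensured by equivariant regular-neighborhood theory, so that $N_i$ and $X_i$ deformation retract $G$-equivariantly onto their $G$-fixed components.
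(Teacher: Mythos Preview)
Your sufficiency argument is in the right spirit and close to the paper's Theorem~\ref{local}, which shows directly that the componentwise congruences $\chi(f^{-1}(C))\equiv\chi(C)\pmod{n_G}$ suffice to extend $f$ to a pseudo-equivalence, with no hypothesis on $G$ beyond non-prime-power order and no weak $G$-connectedness. So sufficiency needs neither the regular-neighborhood surgery nor the relative strengthening you outline; it is already available as a standalone result.

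The necessity argument, however, has a genuine gap. You assert that $X_i=g^{-1}(N_i)$ deformation retracts $G$-equivariantly onto its fixed set $F_i$, citing ``equivariant regular-neighborhood theory.'' But $X_i$ is a \emph{preimage} under $g$, not a regular neighborhood of $F_i$ in $X$; a pseudo-equivalence is merely an unequivariant homotopy equivalence and gives no control whatsoever over the homotopy type (equivariant or not) of preimages of subcomplexes. There is no reason for $g|_{X_i}\colon X_i\to N_i$ to be a homotopy equivalence. A telling symptom: your necessity argument never uses the hypothesis that $G$ is cyclic or satisfies $n_G=0$. If it worked, it would prove componentwise necessity for every non-$p$-group $G$ and every weakly $G$-connected $Y$, something the paper does not claim and which is left open in general (see the discussion of where $N_Y$ sits between $n_G{\bb Z}^A$ and the global-congruence lattice).

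The paper's necessity proof is global and $K$-theoretic rather than local and geometric. One passes to the universal cover and regards $C(\tilde X)\otimes R$ and $C(\tilde Y)\otimes R$ as complexes of projective $R[\Gamma]$-modules (with $R={\bb Q}$ in the cyclic case; with $R={\bb Z}_{p^k}$ after a Smith-theory reduction to $Y^P$ in the $n_G=0$ case). The pseudo-equivalence forces equality of Euler classes in $K_0(R[\Gamma])$, and one reads off the Hattori--Stallings rank. The coefficient of the conjugacy class $(\gamma)$, for $\gamma$ projecting to a generator of the relevant cyclic quotient, computes $\sum_C\chi(C)$ over those components whose splitting class contains $\langle\gamma\rangle$; weak $G$-connectedness collapses each such sum to a single term, giving $\chi(F_C)=\chi(C)$. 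This is precisely where the cyclic/$n_G=0$ hypothesis is consumed, and it cannot be replaced by a neighborhood-restriction argument.
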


Since connected $Y^G$ implies weakly $G$-connected, Theorem \ref{component_wise} generalizes Theorem \ref{connected} in case $n_G=0$. 

The theorem allows $f^{-1}(C)=\emptyset$ for some components. On the other hand, we do allow $f^{-1}(C)$ to be disconnected for some component. Then $X$ will no longer be weakly $G$-connected, but our weakly $G$-connected hypothesis is on $Y$. Our proof of the theorem uses the Hattori-Stallings trace \cite{hattori}, perhaps the most classical of the trace maps from $K$-theory to Hochschild (or even cyclic) homology.

Even without the weakly $G$-connected assumption, our methods apply to give some new constructions of actions, and some new necessary congruence conditions. In this general setting, our results can be profitably compared to earlier work of Oliver and Petrie \cite{op} (which deals with a somewhat different problem) and Morimoto and Iizuka \cite{mi} (which is about pseudo-equivalence of $G$-complexes with finite fundamental groups). 

\begin{theorem}\label{obstruct}
Suppose $G$ is a finite group of not prime power order, and $Y$ is a finite $G$-complex.  Then there is a subgroup $N_Y$ of ${\bb Z}^{\pi_0Y^G}$, such that a $G$-map $f\colon F\to Y$ can be extended to a pseudo-equivalence $g\colon X\to Y$, with finite $G$-complex $X$ satisfying $F=X^G$, if and only if 
\[
(\chi(f^{-1}(C))-\chi(C))\in N_Y.
\]
\end{theorem}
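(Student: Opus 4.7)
The plan is to define $N_Y$ to be the set of tuples $(n_C)_{C\in\pi_0Y^G}\in\bb{Z}^{\pi_0Y^G}$ that are realized as $(\chi(f_0^{-1}(C))-\chi(C))_C$ by \emph{some} $G$-map $f_0\colon F_0\to Y$ that does extend to a pseudo-equivalence $g_0\colon X_0\to Y$ with $F_0=X_0^G$. With this definition, the forward implication is tautological, so the theorem reduces to two claims: (a) $N_Y$ is a subgroup of $\bb{Z}^{\pi_0Y^G}$; and (b) every $f\colon F\to Y$ whose tuple lies in $N_Y$ actually extends, not merely the specific witness $f_0$.

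For (a), the zero tuple is realized by $\mathrm{id}\colon Y\to Y$ together with $F=Y^G$. Closure under addition should come from an equivariant gluing construction: given witnesses $g_i\colon X_i\to Y$ realizing tuples $n^{(i)}$, I would form a combined pseudo-equivalence over $Y$ whose fixed set realizes $n^{(1)}+n^{(2)}$ componentwise, then simplify by equivariant elementary cell trading that is neutral in Euler characteristic. Closure under negation is subtler: given a witness realizing $n$, I would produce one realizing $-n$ by an equivariant ``doubling'' attached by free $G$-cells, whose net effect on each fixed component is to flip the sign of its Euler characteristic contribution. The algebraic bookkeeping should be governed by the Hattori--Stallings trace on $K_0(\bb{Z}\Gamma)$, where $\Gamma$ is the group of lifts of $G$ to the universal cover $\tilde Y$ --- the same trace that organizes the proof of Theorem \ref{component_wise}.

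For (b), given $f\colon F\to Y$ with tuple in $N_Y$, pick a witness $f_0\colon F_0\to Y$ with matching tuple and extension $g_0\colon X_0\to Y$. Since $\chi(f^{-1}(C))=\chi(f_0^{-1}(C))$ on each component $C$, I would modify $X_0$ by a sequence of equivariant moves that trade the fixed set from $F_0$ to $F$ in cancelling pairs, each compensated by the attachment or collapse of free $G$-cells so that the unequivariant homotopy equivalence to $Y$ is preserved. This is essentially the same mechanism underlying Theorem \ref{component_wise}: componentwise Euler characteristic matching is precisely what annihilates the resulting obstruction in $K_0(\bb{Z}\Gamma)$ after applying the trace.

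The hardest step I anticipate is closure under negation in (a). Under the hypotheses of Theorem \ref{component_wise}, the obstruction group was handed to us explicitly as $(n_G\bb{Z})^{\pi_0Y^G}$, so subgroup structure was automatic; here we have no such a priori description of $N_Y$, and inverting a given achievable tuple requires constructing a genuinely new pseudo-equivalence rather than simply collecting and cancelling obstructions. I expect this to succeed by combining the doubling construction with a lifting of Oliver's original $K_0$-theoretic argument from $\bb{Z}G$ to $\bb{Z}\Gamma$, using the splittings of $\Gamma\to G$ over each component of $Y^G$ to keep track of which components each free piece can contribute to.
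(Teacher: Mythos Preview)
Your overall architecture is right: define $N_Y$ as the set of realized tuples, show it is a subgroup, and then show that membership in $N_Y$ suffices for any $f$, not just the witness. But the execution you sketch diverges sharply from the paper's and, in two places, replaces elementary constructions with vague and unnecessary machinery.

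For closure under negation, which you flag as the hardest step, the paper's argument is a one-line geometric trick: take the \emph{double mapping cylinder} $\bar X = Y\cup_g X\times[0,1]\cup_g Y$ with the obvious map $\bar g$ to $Y$. Then $\bar F_C = C\cup F_C\times[0,1]\cup C$, and a direct Euler characteristic count gives $\chi(\bar F_C)-\chi(C) = -(\chi(F_C)-\chi(C))$. No $K_0$-theory, no free $G$-cells, no trace maps. This construction has the bonus that $\bar X$ contains a copy of $Y$ as an equivariant retract and that each $\bar F_C$ is connected; the paper calls such a $\bar g$ a \emph{retracting equivalence}. Addition is then just $X_1\cup_Y X_2\to Y$ for two retracting equivalences, again elementary.

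For part (b), your plan to ``trade the fixed set from $F_0$ to $F$ in cancelling pairs'' is not what the paper does, and it is not clear how you would carry it out without essentially reproving Theorem~\ref{local}. The paper instead uses the retracting property directly: since $Y\subset X'$ and components of $X'^G$ and $Y^G$ correspond bijectively, the map $f$ factors as $g'\circ(i\circ f)$ where $i\colon Y\hookrightarrow X'$. Now $i\circ f\colon F\to X'$ satisfies $\chi((i\circ f)^{-1}(F'_C))=\chi(F'_C)$ on each component, so Theorem~\ref{local} (already proved) gives a pseudo-equivalence extension $h\colon X\to X'$, and $g'\circ h$ is the desired extension of $f$.

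Finally, the Hattori--Stallings trace plays no role in this theorem; it appears only in Section~\ref{fundgroup} for Theorems~\ref{cyclic} and~\ref{comp-wise}, which establish \emph{necessary} conditions used to pin down $N_Y$ in special cases. Invoking it here is a red herring.
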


For $f\colon F\to Y$ to be extended to a pseudo-equivalence $g\colon X\to Y$, it is necessary that the {\em global} Euler characteristic condition $\chi(F)=\chi(Y^G)$ mod $n_G$ is satisfied. The reason is that the mapping cone $cX\cup_g Y$ of the homotopy equivalence $g$ is a contractible $G$-CW-complex with fixed set $(cX\cup_g Y)^G=cF\cup Y^G$. Then Oliver's theorem implies $\chi(cF\cup Y^G)=1+\chi(Y^G)-\chi(F)=1$ mod $n_G$. 

Conversely, the following result says that, if the {\em local} Euler characteristic condition is satisfied for each connected component of $Y^G$, then we have the pseudo-equivalence extension.

\begin{theorem}\label{local}
Suppose $G$ is a group of not prime power order, $F$ is a finite complex, $Y$ is a connected finite $G$-complex, and $f\colon F\to Y$ is a $G$-map. If $\chi(f^{-1}(C))=\chi(C)$ {\rm mod} $n_G$ for every connected component $C$ of $Y^G$, then $f$ can be extended to a pseudo-equivalence $g\colon X\to Y$, with finite $G$-complex $X$ satisfying $F=X^G$.
\end{theorem}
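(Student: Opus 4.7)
The plan is to reduce Theorem~\ref{local} to Oliver's theorem via a mapping-cone reformulation, and then to run Oliver's construction relative to a sub-$G$-complex built from the given map $f$.

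First, a $G$-map $g\colon X\to Y$ is a pseudo-equivalence iff its (unequivariant) mapping cone $Z:=Y\cup_g CX$ is contractible. If moreover $X^G=F$ and $g$ extends $f$, then $Z^G=Y^G\cup_f CF$, and a direct calculation yields
\[
\chi(Z^G)=1+\chi(Y^G)-\chi(F)=1+\sum_{C}\bigl(\chi(C)-\chi(f^{-1}(C))\bigr),
\]
where the sum runs over the components $C$ of $Y^G$. The local hypothesis makes each summand $\equiv 0\pmod{n_G}$, so $\chi(Z^G)\equiv 1\pmod{n_G}$: exactly Oliver's condition for $Z^G$ to be the $G$-fixed set of a contractible finite $G$-CW complex.

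Next, I would set $V:=Y\cup_f CF$, a finite $G$-complex with $V^G=Y^G\cup_f CF$, and attach $G$-cells with proper isotropy (in particular, free $G$-cells) to $V$ following Oliver's construction, aiming to obtain a contractible finite $G$-complex $Z\supset V$. The finiteness obstruction at each stage lives in a quotient of $\widetilde K_0(\mathbb{Z}G)$ detected by $\chi(V^G)\pmod{n_G}$, and is killed by the computation above. Since the newly attached cells have no $G$-fixed points and the open cone $CF\setminus F$ is $G$-equivariantly contractible onto $F$, a $G$-cellular approximation argument lets us homotope the attaching maps of the new cells off $CF\setminus F$, so that they land in $Y\cup F$. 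Taking $X$ to be $F$ together with these reattached cells yields a sub-$G$-complex of $Z$ with $X^G=F$, and $Z$ is canonically identified with $Y\cup_g CX$ for the restriction $g\colon X\to Y$ of the natural projection. Contractibility of $Z$ then forces $g$ to be a homotopy equivalence, i.e., a pseudo-equivalence extending $f$.

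The main obstacle is justifying the relative Oliver construction in the middle paragraph: one must verify that starting from the nontrivial $G$-complex $V$ rather than from its $G$-fixed subcomplex $V^G$ alone introduces no additional obstructions beyond $\chi(V^G)\pmod{n_G}$. In Oliver's absolute theorem, the analysis of $\widetilde K_0(\mathbb{Z}G)$ via the Swan homomorphism and Frobenius functors produces exactly $n_G$ as the obstruction divisor; the relative version requires that the proper-isotropy cells already present in $V$ (coming from $Y\setminus Y^G$) contribute only to the kernel of the relevant detection maps. A secondary, more routine point is the $G$-equivariant homotopy adjustment of attaching maps off $CF\setminus F$.
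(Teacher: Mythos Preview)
Your mapping-cone reformulation is a natural idea, and the step you flag as ``the main obstacle'' is in fact \emph{not} an obstacle: extending a finite $G$-complex $V$ with $\chi(V^G)\equiv 1\pmod{n_G}$ to a contractible finite $G$-complex $Z$ with $Z^G=V^G$ is exactly Lemma~\ref{lem3}(2), which follows from Oliver's work without further hypotheses on $V\setminus V^G$.

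The genuine gap is in the next step. Once you have $Z\supset V=Y\cup_f CF$ contractible, you propose to homotope the attaching maps of the new cells off $CF\setminus F$ so that they ``land in $Y\cup F$'', and then to declare $X=F\cup(\text{new cells})$. Two things go wrong. First, the open cone $CF\setminus F$ (cone point included) is not $G$-equivariantly contractible onto $F$; it retracts to the cone point. What is true is that for each proper $H\le G$ the inclusion $Y^H\hookrightarrow V^H=Y^H\cup_f CF$ is a homotopy equivalence, so you can $G$-homotope the attaching maps into $Y$---but into $Y$, not into $F$. Second, and more seriously, after doing so the new cells are attached to $Y$, not to $F$: your contractible complex becomes $Z=Y'\cup_f CF$ with $Y'=Y\cup(\text{new cells})$. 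Contractibility of this mapping cone says $f\colon F\to Y'$ is a homotopy equivalence. You have produced a pseudo-equivalence by enlarging the \emph{target} (from $Y$ to $Y'$, with $(Y')^G=Y^G$), which is the wrong direction; there is no evident identification of $Z$ with $Y\cup_g CX$ for some $X\supset F$.

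There is also a global reason your outline cannot succeed as written: it uses only the \emph{global} congruence $\chi(F)\equiv\chi(Y^G)\pmod{n_G}$, whereas Theorem~\ref{component_wise} shows that for weakly $G$-connected $Y$ with $n_G=0$ the \emph{componentwise} equalities $\chi(F_C)=\chi(C)$ are necessary. So an argument insensitive to the decomposition of $Y^G$ into components cannot prove Theorem~\ref{local} in general.

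The paper's proof proceeds quite differently. It first homotopes $f$ so that $\chi(f^{-1}(\sigma))\equiv 1\pmod{n_G}$ for every cell $\sigma$ of $Y^G$ (Lemma~\ref{lem2}, which uses the componentwise hypothesis via connectedness of each $C$), then inductively applies Lemma~\ref{lem3} over the cells of $Y^G$ to build a pseudo-equivalence $h\colon W\to Y^G$ with $W^G=F$ and $h^{-1}(\sigma)$ contractible for each cell $\sigma$. Finally, a join/lifting argument with the link of $Y^G$ in $Y$ extends $h$ to $g\colon X\to Y$. The case of empty $F_C$ requires a separate device (Lemma~\ref{circle}). The cell-by-cell structure is what makes the componentwise hypothesis do real work, and it sidesteps the reorganization problem your mapping-cone approach runs into.
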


The discussion on global versus local Euler characteristic conditions means exactly that the obstruction group $N_Y$ introduced in Theorem \ref{local} satisfies
\[
n_G{\bb Z}^A\sub N_Y
\sub \{(a_C)\in {\bb Z}^A\colon n_G\text{ {\rm divides} }\textstyle\sum a_C\},\quad 
A=\pi_0Y^G.
\]
We note two cases in which the two extremes coincide. If $Y^G$ is connected, then $N_Y=n_G{\bb Z}$, and we recover Theorem \ref{connected}. If $n_G=1$, then $N_Y={\bb Z}^A$, which means there is no obstruction at all.
 
Under the condition of Theorem \ref{component_wise}, we know $N_Y$ is the lower bound. To get a somewhat non-trivial example that $N_Y$ is the upper bound, we consider a finite contractible $G$-complex $Y$, and any $G$-map $f\colon F\to Y$ satisfying the global Euler characteristic condition $\chi(F)=\chi(Y^G)=1$ {\rm mod} $n_G$. By Oliver's theory \cite{oliver}, there is a finite contractible $G$-complex $X$, such that $F=X^G$. If we extend $f\colon F\to Y$ to a $G$-map $g\colon X\to Y$, then $g$ is a pseudo-equivalence due to the contractibility of $X$ and $Y$. The extension exists as long as all the fixed sets $Y^H$ are sufficiently highly connected. There is a $G$-representation $V$, such that the representation sphere $S(V)$ has no fixed point by $G$, and is highly connected for the fixed sets of proper subgroups (see the first remark after the proof of Lemma \ref{rep}). Then $Z=Y*S(V)$ is still contractible, and satisfies $Z^G=Y^G$. Moreover, $Z$ becomes highly connected for the fixed sets of proper subgroups. The contractibility implies that the natural map $h\colon Y\to Z$ is a pseudo-equivalence. Therefore $h\circ f\colon F\to Z$ can be extended to a $G$-pseudo-equivalence $g\colon X\to Z$.

In general, we have not yet completely ascertained the pattern which determines where between the two given extremes $N_Y$ actually is, although one can easily produce examples where either extreme is realized. 

\begin{remark}
That being said, if $Y^G=\emptyset$, then it is impossible to construct a pseudo-equivalent action with nonempty fixed set in the context of this paper. However, using zig-zag compositions of pseudo-equivalence maps, sometimes, but not always, it is possible to get from such an action to one whose fixed set is arbitrary.
\end{remark}

Our results about finite $G$-complexes (and compact $G$-ANRs) apply to compact Lie group actions, as well. In \cite{oliver2}, Oliver extended his theorem to the case $G$ is a compact Lie group. He calls $G$ {\em $p$-toral} if the identity component $G_0$ is a torus, and $G/G_0$ is a $p$-group. This is precisely the case Smith theory applies. Then Oliver's (and Quinn's) results apply to his problem in the case when $G$ is not $p$-toral, for any prime $p$. This means two possibilities: (1) If $G/G_0$ is not of prime power order and $G_0$ is a torus, then the results for $G$ are the same as those for $G/G_0$; (2) If $G_0$ is not a torus, then we have $n_G=1$, and there are no Euler characteristic obstruction to making fixed sets. By the direct geometric arguments in Section \ref{nonempty} and \ref{empty}, this also holds in our situation. 

The paper is organized as follows. In Sections \ref{partition} and \ref{nonempty}, we prove Theorem \ref{local} for the case all $F_C=f^{-1}(C)$ are not empty. The case some $F_C=\emptyset$ requires separate treatment, and we prove this case in Section \ref{empty}. This completes the proof of Theorem \ref{local}. Then Theorem \ref{connected} immediately follows from Theorem \ref{local}. The case $F_C=\emptyset$ is significant because it gives actions without fixed points. In fact, we also prove Theorem \ref{aspherical} in Section \ref{empty}. 

In Section \ref{obstruction}, we use a formal construction and Theorem \ref{local} to prove Theorem \ref{obstruct}. In Section \ref{fundgroup}, we prove two results that give necessary conditions on the Euler characteristics. Theorems \ref{cyclic} is for rational pseudo-equivalence under a cyclic group action. Theorem \ref{comp-wise} is for pseudo-equivalence under the action by a group satisfying $n_G=0$. Theorem \ref{component_wise} is a consequence ofthe two theorems.

We would like to thank K. Pawalowsky, R. Oliver and M. Morimoto for valuable conversations about this work.

\section{Cell-wise Partition of Euler Characteristic}
\label{partition}

The proof of Theorem \ref{local} starts with the observation that we can inductively apply Oliver's construction to cells of $Y^G$. Recall that a CW-complex $Y$ is regular if every cell $\sigma$ is given by an embedding $D^k\to Y$. This implies that the boundary $\pa\sigma$ of the cell is a sphere $S^{k-1}$ embedded in $Y$. Then $\chi(\sigma)=\chi(D^{\dim\sigma})=1$, and $\chi(\pa\sigma)=\chi(S^{\dim\sigma-1})=1-(-1)^{\dim\sigma}$.

\begin{lemma}\label{lem1}
Suppose $Y$ is a regular complex, and $f\colon F\to Y$ is a map. If $\chi(f^{-1}(\sigma))=1$ {\rm mod} $n$ for every cell $\sigma$ of $Y$, then $\chi(F)=\chi(Y)$ {\rm mod} $n$.
\end{lemma}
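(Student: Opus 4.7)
The plan is to proceed by strong induction on the number of cells of $Y$, peeling off one top-dimensional closed cell at a time and using additivity of Euler characteristic for the preimages under $f$.

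For the base case, $Y$ is a single $0$-cell, so $F = f^{-1}(Y)$, and the hypothesis gives $\chi(F) \equiv 1 = \chi(Y) \pmod{n}$ directly.

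For the inductive step, I would pick a top-dimensional closed cell $\sigma$ of $Y$, of some dimension $k$. Regularity of $Y$ guarantees that $Y' := Y \setminus \operatorname{int}(\sigma)$ is a regular subcomplex of $Y$ and that $\pa\sigma \cong S^{k-1}$ is a regular subcomplex as well; both $Y'$ and $\pa\sigma$ have strictly fewer cells than $Y$, and since each of their cells is already a cell of $Y$, they inherit the cell-wise hypothesis. The strong inductive hypothesis, applied to $Y'$ and to $\pa\sigma$ with the restrictions of $f$, yields
\[
\chi(f^{-1}(Y')) \equiv \chi(Y') \pmod{n}, \qquad \chi(f^{-1}(\pa\sigma)) \equiv \chi(\pa\sigma) \pmod{n}.
\]
Writing $F = f^{-1}(Y') \cup f^{-1}(\sigma)$ with intersection $f^{-1}(\pa\sigma)$, and similarly $Y = Y' \cup \sigma$ with intersection $\pa\sigma$, additivity of Euler characteristic gives
\[
\chi(F) - \chi(Y) = \bigl[\chi(f^{-1}(Y')) - \chi(Y')\bigr] + \bigl[\chi(f^{-1}(\sigma)) - \chi(\sigma)\bigr] - \bigl[\chi(f^{-1}(\pa\sigma)) - \chi(\pa\sigma)\bigr].
\]
The first and third brackets are multiples of $n$ by induction; the middle bracket is a multiple of $n$ because $\chi(\sigma)=1$ and $\chi(f^{-1}(\sigma)) \equiv 1 \pmod{n}$ by hypothesis. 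Hence $\chi(F) \equiv \chi(Y) \pmod{n}$, completing the induction.

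The main obstacle, such as it is, is justifying additivity of Euler characteristic for the three preimages $f^{-1}(Y'), f^{-1}(\sigma), f^{-1}(\pa\sigma)$. This requires $F$ and $f$ to be nice enough that these preimages are themselves finite CW-subcomplexes of $F$ (for example, that $f$ is cellular with respect to some CW-structure on $F$ refining the preimage decomposition); some such regularity is presumably implicit in the very assumption that each $\chi(f^{-1}(\sigma))$ is a well-defined integer.
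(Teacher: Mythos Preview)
Your argument is correct and is essentially the same as the paper's: both peel off one top cell at a time, apply additivity of the Euler characteristic to the cover $Y=Y'\cup\sigma$ with $Y'\cap\sigma=\pa\sigma$, and invoke the inductive hypothesis on the smaller pieces. The only cosmetic difference is that the paper organizes the induction by dimension (first reducing to the $(d-1)$-skeleton, then attaching the $d$-cells one by one and using that $Y_{i-1}\cap\sigma=\pa\sigma$ has dimension $<d$), whereas you induct directly on the total number of cells; the underlying computation is identical.
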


The lemma allows $n=0$, which means dropping ``mod $n$'' in the statement. In the following proof, all numerical equalities are true mod $n$.

\begin{proof}
If $\dim Y=0$, then $Y$ consists of finitely many points $y_1,y_2,\dots,y_k$, and $F=\cup_{i=1}^k f^{-1}(y_i)$ is a disjoint union. By the assumption, we have $\chi(f^{-1}(y_i))=1$. Therefore $\chi(F)=\sum_{i=1}^k \chi(f^{-1}(y_i))=k=\chi(Y)$.

Suppose $\dim Y=d$, and the lemma is proved for complexes of dimension $<d$. We have $Y^{d-1}\sub Y_1\sub\cdots\sub Y_{k-1}\sub Y_k=Y$, where $Y^{d-1}$ is the $(d-1)$-skeleton of $Y$, and $Y_i$ is obtained by attaching one $d$-cell to $Y_{i-1}$. By the inductive assumption, we have $\chi(f^{-1}(Y^{d-1}))=\chi(Y^{d-1})$. Suppose we already proved $\chi(f^{-1}(Y_{i-1}))=\chi(Y_{i-1})$. Let $Y_i=Y_{i-1}\cup \sigma$ for a $d$-cell $\sigma$. Then $Y_{i-1}\cap \sigma$ is a complex of dimension $<d$. Therefore we have $\chi(f^{-1}(Y_{i-1}\cap \sigma))=\chi(Y_{i-1}\cap \sigma)$ by the inductive assumption. Combined with $\chi(f^{-1}(\sigma))=1=\chi(\sigma)$, we get
\begin{align*}
\chi(f^{-1}(Y_i))
&=\chi(f^{-1}(Y_{i-1}))+\chi(f^{-1}(\sigma))-\chi(f^{-1}(Y_{i-1}\cap \sigma)) \\
&=\chi(Y_{i-1})+\chi(\sigma)-\chi(Y_{i-1}\cap \sigma)
=\chi(Y_i).
\end{align*}
Inductively, this proves $\chi(F)=\chi(f^{-1}(Y_k))=\chi(Y_k)=\chi(Y)$.
\end{proof}

The converse of Lemma \ref{lem1} is true up to homotopy equivalence.

\begin{lemma}\label{lem2}
Suppose $Y$ is a connected regular complex. Suppose $F\ne\emptyset$ and $f\colon F\to Y$ is a map, such that $\chi(F)=\chi(Y)$ {\rm mod} $n$. Then there is a homotopy equivalence $\phi\colon F\simeq \hat{F}$ and a map $\hat{f}\colon \hat{F}\to Y$, such that $\hat{f}\phi\simeq f$, and $\chi(\hat{f}^{-1}(\sigma))=1$ {\rm mod} $n$ for every cell $\sigma$ of $Y$.
\end{lemma}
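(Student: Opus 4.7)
The plan is to reduce the problem to cell-level data about $f$ and then correct it one cell at a time by homotopy-equivalent modifications of $F$. After cellular approximation and, if necessary, replacing $F$ by a subdivided homotopy-equivalent complex so that $f$ is cellular with each open cell of $F$ mapping into the interior of a single cell of $Y$, I introduce $n_\sigma := \chi^c(f^{-1}(\sigma^\circ))$, the compactly supported Euler characteristic of the preimage of the open cell. Additivity of $\chi^c$ over the locally closed stratification $\bar\sigma = \bigsqcup_{\tau \preceq \sigma} \tau^\circ$ yields $\chi(f^{-1}(\bar\sigma)) = \sum_{\tau \preceq \sigma} n_\tau$. An induction on $\dim\sigma$ shows that the target $\chi(\hat f^{-1}(\bar\sigma)) \equiv 1 \pmod n$ for every cell $\sigma$ is equivalent to $n_\sigma \equiv (-1)^{\dim\sigma} \pmod n$ for every $\sigma$.

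Since $\chi(F) = \sum_\sigma n_\sigma$ and $\chi(Y) = \sum_\sigma (-1)^{\dim\sigma}$, the hypothesis $\chi(F) \equiv \chi(Y) \pmod n$ says the discrepancy vector $\bigl(n_\sigma - (-1)^{\dim\sigma}\bigr)_\sigma$ lies in the augmentation kernel $\bigl\{(c_\sigma) \colon \sum_\sigma c_\sigma \equiv 0 \pmod n\bigr\} \subset \mathbb{Z}^{\text{cells}(Y)}$. My basic move is an elementary expansion of $F$: attach a pair $(e^k, e^{k+1})$ with $e^{k+1}$ having a free face on $e^k$. This preserves the simple homotopy type of $F$ and hence $\chi(F)$. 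Given a cover relation $\sigma \prec \tau$ in the face poset of $Y$ (so $\dim\sigma = k$ and $\dim\tau = k+1$), I map $e^k$ into $\bar\sigma$ and $e^{k+1}$ into $\bar\tau$; such extensions exist because both closed cells are contractible. The effect on the tuple $(n_\rho)$ is $n_\sigma \mapsto n_\sigma + (-1)^k$ and $n_\tau \mapsto n_\tau + (-1)^{k+1}$, with all other $n_\rho$ unchanged, so the move contributes $\pm(e_\sigma - e_\tau)$. Since $Y$ is connected, the Hasse diagram of its face poset is connected, and these moves span the full augmentation kernel; hence the required correction, lying in this kernel modulo $n$, can be realized by a finite sequence of moves.

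The main obstacle is realizing each elementary expansion geometrically: attaching the new $k$-cell along a $(k-1)$-disk $D \subset F$ and mapping it into $\bar\sigma$ requires $f(D) \subset \bar\sigma$, and analogously for the $(k+1)$-cell and $\bar\tau$. I would handle this preparatory issue first: starting from the nonempty $F$, successive length-one hair attachments propagating along the edges of $Y$ provide a preimage point above each vertex of $Y$, and further elementary expansions inductively create cells over each higher-dimensional cell of $Y$, supplying the required attaching sites. Once this preparation is complete, the adjustment moves above produce the desired $(\hat F, \hat f)$ together with a homotopy equivalence $\phi\colon F \to \hat F$ satisfying $\hat f \phi \simeq f$.
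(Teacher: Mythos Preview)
Your reformulation in terms of $n_\sigma = \chi^c(f^{-1}(\sigma^\circ))$ and the reduction to adjusting a vector in the augmentation kernel is clean and genuinely different from the paper's proof, which attaches cones $cA, cB$ at a basepoint (mapped along carefully chosen paths into a top cell) and inductively peels off top cells of $Y$. Your approach makes the underlying linear algebra transparent, and the equivalence $\chi(\hat f^{-1}(\bar\sigma))\equiv 1 \Leftrightarrow n_\sigma\equiv(-1)^{\dim\sigma}$ is a nice organizing observation.

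There is, however, a gap that bites exactly when $n=0$ (a case the lemma explicitly allows). Elementary expansions can only be \emph{applied}, never undone, so the achievable corrections form the sub\emph{monoid} of $K$ generated by your moves, not the group they span. Since you take $k=\dim\sigma$, the move for a cover $\sigma\prec\tau$ changes $n_\rho$ by $+(-1)^{\dim\rho}$ for $\rho\in\{\sigma,\tau\}$; thus the quantity $(-1)^{\dim\rho}n_\rho$ can only increase. If, for instance, $f^{-1}(v)$ already contains two points for some vertex $v$, you cannot bring $n_v$ from $2$ down to $1$. For $n\ge 1$ this is harmless: your moves generate $K$ as a group, hence their image in the finite group $K/nK$ is a generating submonoid and therefore all of $K/nK$, so every coset of $nK$ is reached. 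For $n=0$ the target is the single element $0\in K$, and $-d$ need not lie in your monoid.

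The repair is small: for each cover $\sigma\prec\tau$ also allow an expansion one dimension higher, $(e^{k+1},e^{k+2})$, with the open $(k{+}1)$-cell mapped into $\sigma^\circ$ and the open $(k{+}2)$-cell into $\tau^\circ$. This contributes $-(-1)^k(e_\sigma-e_\tau)$, the missing sign, and with both signs available your argument works for all $n$. (Your preparation step already provides a point $p\in F$ with $f(p)\in\bar\sigma$, which suffices to build either expansion with boundary collapsed to $p$.) The paper's cone construction avoids this issue from the start because it attaches $cA$ with $\chi(A)$ an \emph{arbitrary} integer, so each correction step can carry either sign directly.
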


\begin{proof}
We call a cell {\em top cell} if it is not in the boundary of any other cell. Fix $c\in F$ and a top cell $\beta$ containing $f(c)$. We regard $\beta$ as the ``base cell'' of $Y$. For each top cell $\sigma$ different from $\beta$, there is a continuous path $\gamma\colon [-1,1]\to Y$, such that $\gamma(-1)=f(c)$, $\gamma(-1,0)\cap \sigma=\emptyset$, and $\gamma(0,1]\sub\mathring{\sigma}$. This implies that $\gamma(0)\in\pa\sigma$, and $f(c)$ is the only other possible point on the path lying inside $\pa\sigma$.

\begin{figure}[htp]
\centering
\begin{tikzpicture}[>=latex]

\draw
	(-3,-1) -- (2,-1)
	(-2.5,0) -- (-1.5,0)
	(-0.5,0) -- (2,0)
	(-2,0) to[out=20,in=180] 
	(1,0.5) -- (1,0.7)
	(-2,0) to[out=30,in=180] 
	(1,0.7)
	(-2,0) to[out=50,in=190] 
	(0,0.8) to[out=10,in=270] 
	(1,1.3) to[out=90,in=-5] 
	(0,1.8) to[out=175,in=270] 
	(-0.7,2.1) to[out=90,in=185] 
	(0,2.4) to[out=5,in=180]
	(1,2.42) -- (1,2.62)
	(-2,0) to[out=60,in=190] 
	(0,1) to[out=10,in=270] 
	(0.7,1.3) to[out=90,in=-5] 
	(0,1.6) to[out=175,in=270] 
	(-1,2.1) to[out=90,in=185] 
	(0,2.6) to[out=5,in=180]
	(1,2.62);

\draw[dotted]
	(0,0) -- (0,2.7);
	
\draw[->]
	(0,-0.15) -- (0,-0.85);

\fill
	(0,-1) circle (0.05);

\node at (1.25,0.6) {\small $A$};
\node at (1.25,2.5) {\small $B$};
\node at (0.4,0.3) {\small $cA$};
\node at (0.4,2) {\small $cB$};
\node at (-2,-0.15) {\small $c$};

\node at (0.2,-0.5) {\small $f$};

\node at (2.25,0) {\small $F$};
\node at (2.25,-1) {\small $Y$};

\node at (0,-1.25) {\small $\pa\sigma$};
\node at (1,-1.2) {\small $\sigma$};
\node at (-2,-1.2) {\small $\beta$};

\end{tikzpicture}
\end{figure}

For any two spaces $A$ and $B$, we glue cones $cA$ and $cB$ to $F$ by identifying the cone points with $c$. The new space $F'=F\cup_c(cA\cup cB)$ is homotopy equivalent to $F$. We further extend $f$ to $f'\colon F'\to Y$ by mapping the cones to the path $\gamma$ in $Y$. The map is ``straightforward'' on $cA$ and is ``twisted'' on $cB$, as illustrated by the picture. Let $\chi(A)=a$ and $\chi(B)=b$. Then we have
\begin{align*}
\chi(f'^{-1}(\sigma))
&=\chi(f^{-1}(\sigma))+a+2b, \\
\chi(f'^{-1}(\pa\sigma))
&=\chi(f^{-1}(\pa\sigma))+a+3b, \\
\chi(f'^{-1}(Y-\mathring{\sigma}))
&=\chi(f^{-1}(Y-\mathring{\sigma}))+b.
\end{align*}
It is therefore possible to choose $a$ and $b$, such that $\chi(f'^{-1}(\sigma))=\chi(\sigma)=1$ and $\chi(f'^{-1}(\pa\sigma))=\chi(\pa\sigma)=1-(-1)^{\dim\sigma}$. By $\chi(F)=\chi(Y)$ mod $n$, this implies that $\chi(f'^{-1}(Y-\mathring{\sigma}))=\chi(Y-\mathring{\sigma})$ mod $n$. 

The basic construction above reduces the problem to the restriction map $f'|\colon f'^{-1}(Y-\mathring{\sigma})\to Y-\mathring{\sigma}$, which still satisfies the Euler characteristic condition in the lemma. This accommodates an inductive argument. We make the stronger inductive assumption that $f'^{-1}(Y-\mathring{\sigma})$ can be extended to $F''$ by glueing cones (identifying cone points with $c$), and $f'|\colon f'^{-1}(Y-\mathring{\sigma})\to Y-\mathring{\sigma}$ can be extended to $f''\colon F''\to Y-\mathring{\sigma}$, such that $\chi(f''^{-1}(\tau))=1$ mod $n$ for every cell $\tau$ of $Y-\mathring{\sigma}$. Then $\hat{F}=F''\cup_c(cA\cup cB)$ is obtained by glueing cones to $F$ (identifying cone points with $c$). Therefore $\hat{F}$ is homotopy equivalent to $F$, and $\hat{f}=f'\cup f''\colon \hat{F}\to Y$ is homotopy equivalent to $f$. Moreover, we still have $\chi(\hat{f}^{-1}(\tau))=\chi(f''^{-1}(\tau))=1$ for every cell $\tau$ of $Y-\mathring{\sigma}$. By applying Lemma \ref{lem1} to the restriction $\hat{f}|\colon \hat{f}^{-1}(\pa\sigma)\to \pa\sigma$, where cells of $\pa\sigma$ are cells of $Y-\mathring{\sigma}$, we get 
\[
\chi(\hat{f}^{-1}(\pa\sigma))
=\chi(\pa\sigma)
=1-(-1)^{\dim\sigma}
=\chi(f'^{-1}(\pa\sigma)).
\]
On the other hand, we have 
\[
\chi(\hat{f}^{-1}(\sigma))-\chi(\hat{f}^{-1}(\pa\sigma))
=\chi(\hat{f}^{-1}(\mathring{\sigma}))
=\chi(f'^{-1}(\mathring{\sigma}))
=\chi(f'^{-1}(\sigma))-\chi(f'^{-1}(\pa\sigma)).
\]
Therefore $\chi(\hat{f}^{-1}(\sigma))=\chi(f'^{-1}(\sigma))=1$.

There are two problems with the induction argument. The first is that $Y-\mathring{\sigma}$ may not be connected. The second is that there may be only one top cell $\beta$.

The case $Y-\mathring{\sigma}$ not connected happens only when $\sigma$ is a $1$-cell, with only one end $v_0$ attached to $Y-\mathring{\sigma}$, and other end $v_1$ being ``free''. In addition to glueing $cA$ and $cB$, we may further glue a cone $cC$, with the map from $cC$ to $Y$ extending all the way to $v_1$. Then we have
\begin{align*}
\chi(f'^{-1}(\sigma))
&=\chi(f^{-1}(\sigma))+a+2b+c, \\
\chi(f'^{-1}(v_0))
&=\chi(f^{-1}(v_0))+a+3b+c, \\
\chi(f'^{-1}(v_1))
&=\chi(f^{-1}(v_1))+c, \\
\chi(f'^{-1}(Y-\mathring{\sigma}))
&=\chi(f^{-1}(Y-\mathring{\sigma}))+b.
\end{align*}
It is then possible to choose $a,b,c$, such that $\chi(f'^{-1}(\sigma))=\chi(f'^{-1}(v_0))=\chi(f'^{-1}(v_1))=1$. The rest of the inductive argument is the same.

\begin{figure}[htp]
\centering
\begin{tikzpicture}[>=latex]

\draw
	(-3,-1) -- (2,-1)
	(-2.5,0) -- (-1.5,0)
	(-0.5,0) -- (2,0)
	(-2,0) to[out=20,in=180] 
	(2,0.5) -- (2,0.7)
	(-2,0) to[out=30,in=180] 
	(2,0.7);

\draw[dotted]
	(0,0) -- (0,0.7)
	(2,0) -- (2,0.7);
	
\draw[->]
	(0,-0.15) -- (0,-0.85);

\fill
	(0,-1) circle (0.05)
	(2,-1) circle (0.05);

\node at (2.25,0.6) {\small $C$};
\node at (0.4,0.3) {\small $cC$};
\node at (-2,-0.15) {\small $c$};

\node at (0.2,-0.5) {\small $f$};

\node at (2.25,0) {\small $F$};

\node at (0,-1.25) {\small $v_0$};
\node at (2,-1.25) {\small $v_1$};
\node at (1,-1.2) {\small $\sigma$};
\node at (-2,-1.2) {\small $Y-\mathring{\sigma}$};

\end{tikzpicture}
\end{figure}

Finally, we consider the case $Y$ has only one top cell $\beta$. In this case, the assumption already says $\chi(f^{-1}(\beta))=1$, and additional cone construction over $\beta$ does not change this fact. What we need to do is to improve $\chi(f^{-1}(\sigma))$ to $1$ for cells $\sigma$ in $\pa\beta$. The problem is then reduced to the restriction map $f|\colon f^{-1}(\pa\beta)\to\pa\beta$. The induction may continue over $\pa\beta$. 
\end{proof}

\section{Non-empty Fixed Point Set}
\label{nonempty}

We prove Theorem \ref{local}, for the case $F_C=f^{-1}(C)$ is not empty for every component $C$ of $Y^G$. The next section deals with the case some $F_C=\emptyset$. 

To construct the pseudo-equivalence extension, we will first homotopically modify $f\colon F\to Y^G$ to a better map described in Lemma \ref{lem2}. Then we construct the extension by inducting on skeleta of $Y^G$. The following justifies the homotopy modification of $f$.

\begin{lemma}\label{homotopy}
Suppose $Y$ is a finite $G$-complex, and $F$ is a finite complex. If a $G$-map $f\colon F\to Y$ can be extended to a pseudo-equivalence (with $F$ being the fixed set), and we change $f,F,Y$ by equivariant homotopy, then the new map can also be extended to a pseudo-equivalence.
\end{lemma}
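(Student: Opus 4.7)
The plan is to decompose the change of data into three elementary moves and handle each separately.

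First I would treat the case where only $f$ changes by an equivariant homotopy $f \simeq_G f'$ with $F$ and $Y$ unchanged. Since $F = X^G$ is a $G$-subcomplex of $X$, the inclusion $F \hookrightarrow X$ is a $G$-cofibration. Applying the equivariant homotopy extension property to the given pseudo-equivalence $g\colon X \to Y$ extends the homotopy from $F$ to a $G$-homotopy of $g$. Its endpoint $g'$ is $G$-homotopic to $g$ (so still a pseudo-equivalence) and satisfies $g'|_F = f'$, as required.

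Next I would handle the case where only $Y$ changes by a $G$-homotopy equivalence $\psi\colon Y \to Y'$ with $f' \simeq_G \psi \circ f$. Then $\psi \circ g\colon X \to Y'$ is a pseudo-equivalence extending $\psi \circ f$, and a further application of the first move replaces $\psi \circ f$ by $f'$.

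The main case is when only $F$ changes, via a homotopy equivalence $\phi\colon F \to F'$ of finite CW complexes, with compatibility $f' \circ \phi \simeq_G f$. After a cellular approximation of $\phi$ (absorbed by the first move) and an application of the first move to arrange $g|_F = f' \circ \phi$ literally, I would define $X'$ to be the pushout of $X \hookleftarrow F \xrightarrow{\phi} F'$, with the trivial $G$-action on $F'$ (compatible with the trivial action on $F$). A direct orbit inspection shows $(X')^G = F'$. Because $F \hookrightarrow X$ is a cofibration and $\phi$ is a homotopy equivalence, the gluing lemma gives that the inclusion $\iota\colon X \hookrightarrow X'$ is an unequivariant homotopy equivalence. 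The maps $g$ and $f'$ agree under the identification $x \sim \phi(x)$ and so glue to a $G$-map $g'\colon X' \to Y$; two-out-of-three applied to $g = g' \circ \iota$ shows that $g'$ is a homotopy equivalence, i.e., a pseudo-equivalence.

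The main obstacle is ensuring that $X'$ in this last case is a finite $G$-CW complex when $\phi$ identifies cells of $F$. I would handle this by first interposing the mapping cylinder: replace $F'$ by $M_\phi$, in which $F$ embeds as a subcomplex, so that $X \cup_F M_\phi$ is an unambiguous finite $G$-CW complex; a subsequent cellular collapse of the cylinder compatible with the $G$-action then produces a $G$-CW complex with fixed set exactly $F'$. In the principal application of this lemma, to Lemma \ref{lem2}, $\phi$ is simply the inclusion $F \hookrightarrow F \cup (\text{cones})$, so the pushout is directly a CW complex and this subtlety does not arise.
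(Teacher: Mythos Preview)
Your proof is correct and follows essentially the same approach as the paper: decompose into the three elementary moves (homotopy of $f$, change of $Y$, change of $F$), using the equivariant homotopy extension property for the first and the pushout $X' = X \cup_\phi F'$ for the third. The paper's argument is terser and omits the justification of the CW-structure on the pushout and of why $g'\cup f'$ is a homotopy equivalence; your mapping-cylinder remark and the observation about the principal application (where $\phi$ is an inclusion) are reasonable clarifications of points the paper leaves implicit.
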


\begin{proof}
Suppose $g\colon X\to Y$ is a pseudo-equivalence extension of $f\colon F\to Y$. We homotopically change $f,F,Y$ one by one, and argue about the pseudo-equivalence extension of the new map.

First, suppose $f$ is homotopic to $f'\colon F\to Y$. By the equivariant version of the homotopy extension property, the homotopy extends to a $G$-homotopy from $g\colon X\to Y$ to another $G$-map $g'\colon X\to Y$. Then the $G$-map $g'$ extends $f'$, and $g'$ is still a pseudo-equivalence. 

Second, suppose $\phi\colon F\to F'$ is a homotopy equivalence. Then there is a map $f'\colon F'\to Y$, such that $f\colon F\to Y$ is homotopic to $f'\circ \phi\colon F\to F'\to Y$. By the argument above, $f'\circ \phi$ has pseudo-equivalence extension $g'\colon X\to Y$. Then $X'=X\cup_{\phi} F'$ (glueing $F\sub X$ to $F'$ by $\phi$) is a $G$-complex with $F'$ as the fixed set, and the $G$-map $g'\cup f'\colon X'\to Y$ is a pseudo-equivalence extension of $f'$. 

Finally, suppose $\psi\colon Y\to Y'$ is a $G$-homotopy equivalence. Then $\psi\circ f\colon F\to Y'$ is extended to a pseudo-equivalence $\psi\circ g\colon X\to Y'$.
\end{proof}

For the inductive construction (on skeleta of $Y^G$) of pseudo-equivalence extension, we use the following result.

\begin{lemma}\label{lem3}
For any group $G$ of not prime power order. Suppose $K$ is a finite $G$-complex, and $F=K^G$. 
\begin{enumerate}
\item If $\chi(F)=1$ mod $m_G$, then $K$ can be extended to a finite $G$-complex $X$, such that $F=X^G$, $X$ is $(\dim X-1)$-connected, and $H_{\dim X}(X;{\bb Z})$ is a projective ${\bb Z}G$-module.
\item If $\chi(F)=1$ mod $n_G$, then $K$ can be extended to a finite contractible $G$-complex $X$, such that $F=X^G$. 
\end{enumerate}
\end{lemma}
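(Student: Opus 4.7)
The plan is to run Oliver's inductive construction of contractible $G$-complexes with prescribed fixed set, but starting from the given $G$-complex $K$ (with $K^G=F$) rather than from $F$ alone. The key flexibility is that we may attach \emph{free} $G$-orbits of cells in any dimension exceeding $\dim F$: such cells have empty $G$-fixed set, so $X^G = K^G = F$ is preserved throughout, while we retain full freedom to modify the underlying non-equivariant homotopy type of $X$.

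First, I would iteratively attach free $G$-cells to kill the lower homotopy groups of the underlying space. Because $K$ is a finite complex and each successive attachment keeps the space finite, every homotopy group we need to kill is finitely generated, so finitely many free $G$-orbits of cells suffice at each stage. After finitely many steps we obtain a finite $G$-complex $X'$ of some chosen dimension $n$ which is $(n-1)$-connected. The top homology $H_n(X';\bb{Z}) \cong \pi_n(X')$ is then a finitely generated $\bb{Z}G$-module; the standard Wall trick of attaching balanced pairs of free cells in dimensions $n$ and $n+1$ arranges it to be stably projective.

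At this stage the obstruction to making $H_n(X';\bb{Z})$ actually projective (respectively, stably trivial) is a class in $\tilde{K}_0(\bb{Z}G)$. By Oliver's analysis \cite{oliver,oliver2} (and Quinn's refinement \cite{quinn} for the $G$-ANR version of this statement), the image of this obstruction under the relevant Swan/trace homomorphism is controlled precisely by $\chi(F)$ modulo $m_G$ in the first case and modulo $n_G$ in the second. Under the hypothesis $\chi(F) \equiv 1 \pmod{m_G}$ the obstruction lies in a class realizable by an honestly projective module, giving part (1); under $\chi(F) \equiv 1 \pmod{n_G}$ the obstruction vanishes entirely, so $H_n(X';\bb{Z})$ is stably free, and one final round of paired cell-attachments cancels it to produce a contractible $X$, giving part (2).

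The main obstacle is verifying that the obstruction truly depends only on $\chi(F)$ and not on the finer equivariant cellular structure of $K$ away from $F$. The non-$G$-fixed cells of $K$ have intermediate isotropy $H \subsetneq G$ and contribute non-trivially to the Smith-theoretic Euler characteristic identities that underlie Oliver's definition of $n_G$ and $m_G$. My plan for handling this is to first enlarge $K$ by a join with a suitable representation sphere $S(V)$ satisfying $S(V)^G = \emptyset$ and $S(V)^H$ highly connected for all proper $H \subsetneq G$ (cf.\ the forthcoming Lemma~\ref{rep}); this preserves $X^G = F$ while making the proper fixed sets $K^H$ highly connected, so that their contributions to the Smith identities become transparent and the entire obstruction calculation reduces to Oliver's original one expressed in terms of $\chi(F)$ alone.
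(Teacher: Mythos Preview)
There is a genuine gap. You restrict yourself to \emph{free} $G$-cells because they leave $X^G=F$ untouched, but so does any cell $G/H\times D^k$ with $H\subsetneq G$; the restriction is unnecessary, and it is fatal. Attaching only free cells moves the equivariant Euler characteristic $\sum_i(-1)^i[C_i(X')]\in G_0({\bb Z}G)$ only within a coset of the image of $K_0({\bb Z}G)$, and by Swan's theorem that image lands (after $\otimes\,{\bb Q}$) in ${\bb Z}\cdot[{\bb Q}G]\subset R_{\bb Q}(G)$. Already for $K=F$ this forces $\chi(F)[{\bb Q}]\equiv[{\bb Q}]\bmod {\bb Z}\cdot[{\bb Q}G]$, hence $\chi(F)=1$ exactly, not merely $\chi(F)\equiv 1\bmod n_G$. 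More generally the permutation modules ${\bb Z}[G/H]$ contributed by the intermediate-isotropy cells of $K$ are not projective and cannot be cancelled by free summands, so $H_n(X')$ is typically not projective and the ``obstruction in $\tilde{K}_0({\bb Z}G)$'' you want is not even defined. Your join with $S(V)$ addresses a different issue (connectivity of the fixed sets $K^H$) and does not touch this one: the join only suspends $\tilde H_*(K)$ without altering its ${\bb Z}G$-module type.

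The paper avoids all of this by not re-running Oliver's construction. Part (1) is a direct citation of \cite[Theorem~2]{oliver}, whose proof already accommodates an arbitrary starting $K$. For part (2) the paper first invokes Oliver to produce \emph{some} finite contractible $Z$ with $Z^G=F$; then it attaches to $K$ cells $G/H\times D^j$ with $H$ a proper subgroup outside the family ${\mc F}$ of prime-power subgroups---so decidedly non-free cells---until the orbit-type Euler numbers $\delta_H$ of the resulting $L$ match those of $Z$ for every $H\notin{\mc F}$ (the match at $H=G$ being automatic from $L^G=F=Z^G$). Finally \cite[Theorem~2]{oliver2} extends $L$ to a contractible $X$ with all new isotropy in ${\mc F}$, so $X^G=L^G=F$. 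Adjusting the $\delta_H$ across many isotropy types is exactly what replaces the $\tilde{K}_0$ step you were aiming at.
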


The first statement is \cite[Theorem 2]{oliver}, and Oliver called the $G$-complex $X$ in the statement {\em $G$-resolution}. The second statement is essentially the corollary to \cite[Theorem 3]{oliver}. More precisely, we may use \cite[Theorem 2]{oliver2}, which is even applicable to compact Lie group actions: Let ${\mc F}$ be a {\em family} of subgroups of $G$ as defined by tom Dieck, meaning closed under subgroup and conjugation. If ${\mc F}$ contains all the prime toral subgroups, $X$ and $Z$ are finite $G$-complexes, $Z$ is contractible, and $\chi(X^H/NH)=\chi(Z^H/NH)$ for all $H\not\in {\mc F}$, then $X$ can be extended to a finite contractible $G$-complex $Y$, such that the isotropy subgroups of $Y-X$ are in ${\mc F}$. 

Let
\[
\delta_H=\sum_j(-1)^j(\text{number of cells of type }G/H\times D^j).
\]
Then the proof of \cite[Lemma 14]{oliver} shows that $\chi(X^H/NH)=\chi(Z^H/NH)$ for all $H\not\in {\mc F}$ if and only if $\delta_H(X)=\delta_H(Z)$ for all $H\not\in {\mc F}$.

By \cite[Theorem 3]{oliver2}, the assumption $\chi(F)=1$ mod $n_G$ implies $F=Z^G$ for a finite contractible $G$-complex $Z$. Let ${\mc F}$ be the family of all the prime toral subgroups of $G$. By adding $G/H\times D^j$ to $K$, for $H\not\in{\mc F}\cup\{G\}$, it is easy to get a finite $G$-complex $L$, such that $\delta_H(L)=\delta_H(Z)$ for all $H\not\in {\mc F}\cup\{G\}$. Since $H\ne G$ in the construction of $L$, we have $L^G=K^G=F=Z^G$. Therefore we also have $\delta_G(L)=\chi(L^G)=\chi(Z^G)=\delta_G(Z)$, and we get $\delta_H(L)=\delta_H(Z)$ for all $H\not\in {\mc F}$. This implies $\chi(L^H/NH)=\chi(Z^H/NH)$ for all $H\not\in {\mc F}$. Then by the interpretation of \cite[Theorem 2]{oliver2} above, $L$ extends to a finite contractible $G$-complex $X$, such that the isotropy subgroups of $X-L$ are in ${\mc F}$. Since $G$ is not an isotropy subgroup of $X-L$, we have $X^G=L^G=F$.

\begin{proof}[Proof of Theorem \ref{local} in case all $F_C\ne\emptyset$]
The $G$-complex $Y$ is $G$-homotopic to a regular $G$-complex. If $F_C$ are not empty, then we apply Lemma \ref{lem2} to homotopically modify all $F_C\to C$, such that $\chi(f^{-1}(\sigma))=1$ mod $n_G$ for every cell $\sigma$ of $Y^G$. By Lemma \ref{homotopy}, it is sufficient to construct pseudo-equivalence extension under the additional assumption.

Denote $Z=Y^G$, which has trivial $G$-action. We first extend $f\colon F\to Z$ to a pseudo-equivalence $h\colon W\to Z$ by inducting on the skeleta of $Z$. 

We assume $F^{k-1}=f^{-1}(Z^{k-1})$ is already extended to a $G$-complex $W^{k-1}$ with $(W^{k-1})^G=F^{k-1}$. Moreover, we assume that $f|_{F^{k-1}}$ is extended to a $G$-map $h_{k-1}\colon W^{k-1}\to Z^{k-1}$, such that $h_{k-1}^{-1}(\sigma)$ is contractible for every cell $\sigma$ of $Z^{k-1}$. The inductive assumption holds for $k=0$, because $Z^{-1}=F^{-1}=\emptyset$.

Let $\sigma$ be a $k$-cell of $Z$. Then $\chi(f^{-1}(\sigma))=1$ mod $n_G$ by our assumption. Taking $f^{-1}(\sigma)$ and $f^{-1}(\sigma)\cup h_{k-1}^{-1}(\pa\sigma)$ as $F$ and $K$ in the second part of Lemma \ref{lem3}, we may extend $f^{-1}(\sigma)\cup h_{k-1}^{-1}(\pa\sigma)$ to a finite contractible $G$-complex $W_{\sigma}$, such that $W_{\sigma}^G=f^{-1}(\sigma)$. Since $\sigma$ is contractible and has trivial $G$-action, we may further arrange to extend $f|_{\sigma}\cup h_{k-1}|_{\pa\sigma}$ to a $G$-map $h_{\sigma}\colon W_{\sigma}\to \sigma$, such that $h_{\sigma}^{-1}(\pa\sigma)= h_{k-1}^{-1}(\pa\sigma)$.

Let $W^k=W^{k-1}\cup(\cup_{\dim \sigma=k}W_{\sigma})$, where the union identifies $h_{k-1}^{-1}(\pa\sigma)\sub W_{\sigma}$ with the same subset in $W^{k-1}$. Then we have $G$-map $h_k=h_{k-1}\cup(\cup_{\dim\sigma=k}h_{\sigma})\colon W^k\to Z^k$, such that $(W^k)^G=F^{k-1}\cup (\cup_{\dim \sigma=k}f^{-1}(\sigma))=F_k$. Moreover, we have $h_k^{-1}(\sigma)=W_{\sigma}$ if $\dim\sigma=k$, and $h_k^{-1}(\sigma)=h_{k-1}^{-1}(\sigma)$ if $\dim\sigma<k$. Therefore $h_k^{-1}(\sigma)$ is contractible for every cell $\sigma$ of $Z^k$.

When $k=\dim Z$, we get $h=h_{\dim Z}\colon W=W^{\dim Z}\to Z$, such that $W^G=F$, and $h^{-1}(\sigma)$ is contractible for every cell $\sigma$ of $Z$. This implies that $h\colon W\to Z=Y^G$ is a homotopy equivalence.

Next, we further extend $h\colon W\to Z=Y^G\sub Y$ to a pseudo-equivalence $g\colon X\to Y$.

The equivariant neighborhood $\text{nd}(Z)$ of $Z$ in $Y$ is the mapping cylinder of a $G$-map $\lambda\colon E\to Z$. We try to factor $\lambda$ through a $G$-map $\tilde{\lambda}\colon E\to W$. Then $\lambda=h\circ \tilde{\lambda}$, and we have a $G$-map from the mapping cylinder of $\tilde{\lambda}\colon E\to W$ to the mapping cylinder of $\lambda\colon E\to Z$. The $G$-map extends to a $G$-map $g=id\cup h\colon X=(Y-\text{nd}(Z))\cup_{\tilde{\lambda}}W\to Y=(Y-\text{nd}(Z))\cup_{\lambda}Z$. We have $X^G=W^G=F$, and $g$ extends $f$. Moreover, since $h$ is a pseudo-equivalence, $g$ is also a pseudo-equivalence.

It remains to construct the lifting $\tilde{\lambda}$. Since $G$ fixes no points on $E$, we can construct the lifting if $h\colon W\to Z$ is highly connected for the fixed sets of proper subgroups of $G$ acting on $W$. Recall that we actually constructed $h\colon W\to Z$ in such a way that, for every cell $\sigma$ in the (regular) CW-complex $Z$, $h^{-1}(\sigma)$ is contractible. Let $S$ be the disjoint union of all the $G$-orbits appearing in $E$. Then $S$ is a compact set, such that all the isotropies on $E$ appear in $S$. Then we may take the cell-wise join of $h\colon W\to Z$ with $S\times Z\to Z$ several times to get $h'\colon W'\to Z$. This means $h'^{-1}(\sigma)=h^{-1}(\sigma)*S*\cdots*S$. Since $h^{-1}(\sigma)$ is contractible, $h'^{-1}(\sigma)$ is still contractible. Since $G$ fixes no points of $S$, we get $W'^G=W^G=F$. Therefore $h'$ is still a pseudo-equivalence extension of $f$. On the other hand, the fixed sets of proper subgroups of $G$ acting on $h'^{-1}(\sigma)$ become more and more highly connected as we repeat the join construction more and more times. Therefore we may construct the lifting $\tilde{\lambda}$ by using $h'$ instead of $h$. 
\end{proof}

For the $G$-ANR case, we only need to modify the proof in the very last call. We may use Quinn's ``wrinkling'' trick from \cite[Section 2]{quinn} to remove the $\tilde{K}_0$-obstruction arising in the first part of Lemma \ref{lem3}.

\section{Empty Fixed Point}
\label{empty}

We still need to prove Theorem \ref{local} for the case some $F_C=\emptyset$. In this case, $\chi(F_C)=\chi(C)$ mod $n_G$ means $\chi(C)=0$ mod $n_G$. A typical example is that $C$ is the circle $S^1$, and our proof starts with this special case. In fact, we will concentrate on the case $Y=Y^G=S^1$.

\begin{lemma}\label{circle}
Suppose $G$ is a group of not prime power order. Then there is a pseudo-equivalence $X\to S^1$, such that $X$ has no fixed point, and $G$ fixes $S^1$.
\end{lemma}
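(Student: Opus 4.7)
The plan is to construct $X$ directly, exploiting that $\chi(\emptyset) = 0 = \chi(S^1)$ so there is no Euler characteristic obstruction. The construction splits into two cases according to whether $n_G = 1$.

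In the first case, $n_G = 1$. Here Oliver's theorem (the second part of Lemma~\ref{lem3}, applied with empty fixed set, which is allowed since $\chi(\emptyset) = 0 \equiv 1 \pmod{n_G}$) produces a finite contractible $G$-complex $W$ with $W^G = \emptyset$. Setting $X = W \times S^1$ with $G$ acting on $W$ and trivially on $S^1$, one obtains $X^G = W^G \times S^1 = \emptyset$ and $X \simeq W \times S^1 \simeq S^1$ unequivariantly; the projection onto the $S^1$ factor is then $G$-equivariant and an unequivariant homotopy equivalence, as required.

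In the second case, $n_G \neq 1$. By the classification recalled in the introduction, $G$ is then of the form $P \lhd H \lhd G$ with $P, G/H$ of prime power order and $H/P$ cyclic. Since $G$ is itself not a prime power, I would first observe that this forces a nontrivial cyclic quotient $\pi\colon G \twoheadrightarrow {\bb Z}/m$ with $m \geq 2$: either take the abelianization of the nontrivial prime power quotient $G/H$, or (when $G=H$) take $G \twoheadrightarrow G/P$ directly. My plan is to use $\pi$ to build $X$ as a twisted cyclic quotient of Oliver's complex. Concretely, start from Oliver's contractible $G$-complex $Z$ with $Z^G = \{v\}$ (permitted since $\chi(\{v\})=1$), form an infinite chain $\tilde{X} = \bigcup_{n \in {\bb Z}} Z_n$ of copies of $Z$ glued along suitable non-$G$-fixed equivariant subcomplexes, carrying a ${\bb Z}$-shift commuting with $G$, and take $X = \tilde{X}/{\bb Z}$. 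The cyclic quotient $\pi$ is used to twist the shift so that the fixed point of each $Z_n$ is identified with a non-fixed orbit point of $Z_{n+1}$, killing the global $G$-fixed set while preserving a mapping to $S^1$ via the quotient $\tilde{X} \to {\bb R}$.

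The hard part will be case two: the naive product $W \times S^1$ always yields $X^G = W^G \times S^1 \neq \emptyset$ whenever $W^G \neq \emptyset$, so the nontrivial $\pi$-twist is essential to break the resulting fixed circle. Conversely, the twist must remain compatible with the target $S^1$ (viewed as ${\bb R}/{\bb Z}$) for the induced $G$-invariant map $X \to S^1$ to be an unequivariant homotopy equivalence, which is exactly the delicate balance controlled by the extension $1 \to {\bb Z} \to \Gamma \to G \to 1$ classified by $\pi$.
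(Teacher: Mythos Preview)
Your first case ($n_G = 1$) is correct and clean.

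Your second case, however, runs into a genuine obstruction. You want to glue copies $Z_n$ of Oliver's contractible complex (with $Z^G = \{v\}$) into a chain $\tilde{X}$ carrying a ${\bb Z}$-shift that commutes with $G$, yet also ``twist'' via $\pi\colon G \to {\bb Z}/m$ so that the fixed point $v_n$ is identified with a non-fixed point of $Z_{n+1}$. These two requirements are incompatible. If the shift really commutes with $G$ (so $\Gamma = G \times {\bb Z}$), then every $G$-equivariant gluing must send the fixed point $v_n$ to a fixed point, and $X = \tilde{X}/{\bb Z}$ retains a fixed point. If instead you pass to the non-split extension $\Gamma$ determined by $\pi$ (as your final sentence hints), then $X^G = \emptyset$ can be arranged, but now no $G$-map $f\colon X \to S^1$ with trivial $G$-action on $S^1$ can be a homotopy equivalence: lifting such an $f$ to $\tilde f\colon \tilde{X} \to {\bb R}$, the quantity $\tilde f(\gamma\tilde x) - \tilde f(\tilde x)$ is a locally constant integer depending only on $\gamma$, hence a homomorphism $c\colon \Gamma \to {\bb Z}$; since $f$ has degree $\pm 1$, $c$ restricts to $\pm\text{id}$ on ${\bb Z}\sub\Gamma$, which splits the extension. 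So the ``delicate balance'' you invoke cannot actually be struck with these ingredients.

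The paper bypasses this impasse by discarding the contractible Oliver complex entirely and instead using a fixed-point-free representation sphere $S(V)$. Because $S(V)^G = \emptyset$ to begin with, one may keep the split extension $\Gamma = G \times {\bb Z}$: the mapping torus of any $G$-self-map $h\colon S(V) \to S(V)$ projects $G$-equivariantly to $S^1$ with trivial $G$-action, and has empty $G$-fixed set. The remaining issue is that $S(V)$ is not contractible, so one needs $h$ to induce zero on reduced homology for the mapping torus to be homotopy equivalent to $S^1$; the existence of such a degree-$0$ equivariant self-map is the content of Lemma~\ref{rep}. This construction is uniform in $G$ and does not require your case distinction.
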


The idea is to find a simply connected $G$-space $Z$ without fixed points, and a $G$-map $h\colon Z\to Z$ inducing zero homomorphism on the reduced homology. Then the mapping torus $X$ of $h$ together with the natural map to $S^1$ gives what we want. We may take $Z=S(V)$ and take $h$ to be the self map of $S(V)$ in the following result.

\begin{lemma}\label{rep}
Suppose $G$ is a group of not prime power order, and $V$ is a linear $G$-representation. If $V^G=0$ and all Sylow subgroups of $G$ are isotropy groups of $V$, then there is a degree $0$ $G$-map from the unit sphere $S(V)$ to itself.
\end{lemma}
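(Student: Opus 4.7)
My strategy is to construct $f$ by equivariant obstruction theory, controlling the integers $d_H := \deg(f^H \colon S(V^H) \to S(V^H))$ as $H$ ranges over subgroups of $G$ with $V^H \ne 0$, and arranging $d_{\{1\}} = \deg(f) = 0$. I would give $S(V)$ a $G$-CW structure refining the isotropy stratification and build $f$ skeleton by skeleton; on a cell of type $G/H \times D^n$, the obstruction to extending $f$ lies in $\pi_{n-1}(S(V^H))$, and at the top cells of each stratum these obstructions are exactly encoded by the $d_H$.

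The main tool is tom Dieck's realizability theorem for $G$-self-maps of representation spheres: a tuple $(d_H)$ is the fixed-point degree tuple of some $G$-self-map of $S(V)$ if and only if it satisfies congruences of the form $d_H \equiv d_K \pmod{p^{\alpha(H,K)}}$ whenever $H \subsetneq K$, $K/H$ is a $p$-group, and $V^H \ne V^K$. Because $V^G = 0$ we have $S(V^G) = \emptyset$, so no variable $d_G$ enters the system and the congruences never directly force $d_{\{1\}}$ to be nonzero from the ``top''.

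To produce a consistent tuple with $d_{\{1\}} = 0$, I would use the hypothesis that every Sylow $p$-subgroup $P$ of $G$ is an isotropy group of $V$. This gives $V^P \ne 0$ and makes $d_P$ a genuine free parameter. Since $|G|$ is not a prime power, it has at least two distinct prime divisors, and the congruence systems at distinct primes are independent by the Chinese Remainder Theorem. Taking $d_P$ divisible by a sufficiently high power of $p$ at each Sylow $P$, and propagating compatibly to intermediate $p$-subgroups through each chain $\{1\} \subsetneq \cdots \subsetneq P$, yields a valid tuple with $d_{\{1\}} = 0$. Tom Dieck's theorem then produces the corresponding $G$-self-map, which has the desired degree $0$.

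The main technical obstacle is keeping track of the congruences along all chains of $p$-subgroups, not merely at Sylow level. However, the Sylow-isotropy hypothesis supplies exactly the free parameters needed to absorb these constraints, and the non-prime-power hypothesis ensures independence across primes; together they make the system consistent in favor of the target $d_{\{1\}} = 0$. If a more elementary, direct construction is desired in the paper, I would instead build $f$ explicitly by combining equivariant join constructions from maps on the $S(V^P)$ for Sylow $P$, using the analogous numerical argument to tune the final degree to zero.
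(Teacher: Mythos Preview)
Your obstruction-theoretic plan is a legitimate route and would succeed once the tom Dieck congruences are carefully written out, but it is genuinely different from what the paper does. The paper gives a direct geometric construction: for each Sylow $p$-subgroup $P$ (with normalizer $N_P$), pick a point $x\in S(V^P)$, take a small $N_P$-invariant disk $D$ about $x$, and use the pinch map $S(V)\to S(V)\vee D/\partial D\to S(V)$ (extended $G$-equivariantly over the orbit $Gx$) to produce a $G$-self-map of degree $1\pm|G/N_P|$. Iterating at disjoint orbit points and over all primes yields degree $1+\sum_i a_i|G/N_{P_i}|$ for arbitrary integers $a_i$; since $p_i\nmid|G/N_{P_i}|$ by Sylow's theorem, these indices have $\gcd$ equal to $1$, and B\'ezout gives degree $0$. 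This is elementary, explicit, and self-contained, whereas your approach imports the full realizability theorem and must verify compatibility of the chosen $d_H$ across \emph{all} subgroups $H$ with $V^H\neq 0$ (not only along $p$-chains up to Sylow subgroups, as you note). Your final sentence gestures toward a hands-on construction, but the paper uses pinch/collapse maps rather than joins; the arithmetic core---coprimality of the Sylow indices---is the same in both arguments.
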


\begin{proof}
Let $P$ be a Sylow subgroup and let $N_P$ be its normalizer in $G$. Since $P$ is an isotropy subgroup, the fixed subspace $V^P$ is not a zero subspace. Let $D$ be a small equivariant disk neighborhood of a point $x\in S(V^P)$. Then $D$ is an $N_P$-representation. Moreover, since $V$ is a linear representation, the $N_P$-representation is independent of the size of $D$. This means that the radial extension gives an $N_P$-equivariant homeomorphism $D/\pa D\cong S(V)$ sending $*=\pa D/\pa D$ to $x$. Then we may construct an $N_P$-map
\[
S(V)=(S(V)-D)\cup_{\pa D} D 
\to S(V)\vee_x D/\pa D
\to S(V).
\]
The first map collapses $\pa D$ to $x$, and the second map uses the $N_P$-equivariant homeomorphism $D/\pa D\cong S(V)$. The map can be extended to a $G$-map
\[
h_x\colon S(V)
\to S(V)\cup_{Gx}(G\times_{N_P} D/\pa D)
\to S(V).
\]

If we fix an orientation of $S(V)$, then $D$ inherits the orientation, and the homeomorphism $D/\pa D\cong S(V)$ has degree $\pm 1$. By composing with the $-1$ map along a $1$-dimensional subspace of $V^P$, we may change the sign of the degree of the homeomorphism. Therefore we can arrange to have the degree of $h_x$ to be $1+|G/N_P|$ or to be $1-|G/N_P|$. If we apply the construction at several points $x\in S(V^P)$ with disjoint orbits $Gx$, then we get a $G$-map $S(V)\to S(V)$ of degree $1+a|G/N_P|$ for any integer $a$. If we apply the construction to the Sylow subgroups $P_1,P_2,\dots,P_n$ for all the distinct prime factors of $|G|$, then we get a $G$-map $S(V)\to S(V)$ of degree $1+\sum a_i|G/N_{P_i}|$, where $a_1,a_2,\dots,a_n$ can be any prescribed integers. Since $|G/N_{P_1}|,|G/N_{P_2}|,\dots,|G/N_{P_n}|$ are coprime, we get degree $0$ by suitable choice of the integers $a_i$.
\end{proof}

\begin{remark}
We may further make $S(V)$ highly connected for the fixed sets of proper subgroups. Specifically, the kernel of the augmentation $\epsilon(\sum_{g\in G}a_gg)=\sum a_g\colon {\bb R}G\to {\bb R}$ is a representation satisfying the condition of the lemma. The direct sum of several copies of this kernel also satisfies the condition of the lemma. By taking the direct sum of sufficiently many copies, the fixed sets of $S(V)$ for proper subgroups are highly connected. 
\end{remark}

\begin{remark}
Lemma \ref{circle} is valid for non-prime toral compact Lie groups. Moreover, the remark above on the high connectivity is also valid. 

Suppose a compact Lie group $G$ is not prime toral. If the identity component $G_0$ is not abelian (i.e., not torus), then by \cite[Theorem 5]{oliver2}, there is a finite contractible $G$-complex $Z$ without fixed points. In fact, $Z$ can be a disk with smooth $G$-action. Then $X=Z\times S^1\to S^1$ is a pseudo-equivalence, and $X^G=\emptyset$. 

If $G_0$ is abelian, then the order of $G/G_0$ is not prime power. We may apply Lemma \ref{rep} to $G/G_0$, and then take the mapping cylinder to construct $X$. We obtain a $G/G_0$-pseudo-equivalence $X\to S^1$, and $X$ has no $G/G_0$-fixed points. This induces a $G$-pseudo-equivalence $X\to S^1$, and $X$ still has no $G$-fixed points. 
\end{remark}

Alternatively, we may use Bartsch's study of the existence of Borsuk-Ulam theorems \cite{bartsch} to find degree $0$ $G$-map from a fixed point free representation sphere to itself. The equivalence of properties ($c$) and ($d$) of his Theorem 1 gives such a map for finite groups of non-prime power order. The equivalence of properties ($c$) and ($d'$) of his Theorem 2 gives such a map for non-prime toral compact Lie groups. The map on the representation sphere has degree $0$ because it takes the whole sphere into a proper sub-sphere.

\begin{proof}[Proof of Theorem \ref{local} in case some $F_C=\emptyset$]
Assume $F_C=\emptyset$ for some $C$. Then the condition $\chi(F_C)=\chi(C)$ mod $n_G$ means $\chi(C)=0=\chi(S^1)$ mod $n_G$. 

If $F_C=\emptyset$, then we introduce $F'_C=S^1\to C$, where the map can be any one. If $F_C\ne\emptyset$, then we let $F'_C=F_C$, and let the map $F'_C\to C$ be $F_C\to C$. Then $f\colon F\to Y^G$ extends to $f'\colon F'=\cup F'_C\to Y^G$. The modification $f'$ satisfies the Euler characteristic condition in the theorem, and all $F'_C$ are not empty. Since the theorem is already proved for the case all $F_C\ne\emptyset$, $f'$ has a pseudo-equivalence extension $X'\to Y$, with $X'^G=F'$. 

It remains to homotopically replace the extra circles added to $F$ by something that have no fixed points. The equivariant neighborhood $\text{nd}(S^1)$ of one such circle in $X'$ is the mapping cylinder of a $G$-map $\lambda\colon E\to S^1$. By Lemma \ref{circle} and the remarks after the proof of Lemma \ref{rep}, there is a highly connected (for the fixed sets of proper subgroups) pseudo-equivalence $\mu\colon W\to S^1$, such that $W$ has no fixed point. By the high connectivity, $\lambda$ can be lifted to a $G$-map $\tilde{\lambda}\colon E\to W$. Then we have $\lambda=\mu\circ\tilde{\lambda}$. Let $X=(X'-\text{nd}(S^1))\cup_{\tilde{\lambda}}W$ be obtained by glueing the boundary $E$ of $\text{nd}(S^1)$ to $W$, and this is done for all $F'_C=S^1\sub X'$ that were used to replace empty $F_C$. Then $\lambda=\mu\circ\tilde{\lambda}$ and the pseudo-equivalence $\mu$ induce a pseudo-equivalence $X\to X'=(X'-\text{nd}(S^1))\cup_{\lambda}S^1$. The composition $X\to X'\to Y$ is then a pseudo-equivalence with $X^G=F$ and extending $f$.
\end{proof}

We end the section by using Lemma \ref{circle} to prove Theorem \ref{aspherical}.

\begin{proof}[Proof of Theorem \ref{aspherical}]
By Lemma \ref{circle}, there is a $G$-complex $X$ without fixed point, and a pseudo-equivalence $f\colon X\to S^1$, where $G$ fixes $S^1$. By thickening, we may assume $X$ is a manifold with boundary. Let $C$ be the mapping cylinder of $f$. Then the inclusion $X\to C$ is a pseudo-equivalence. We can now do a Davis construction \cite{davis,dh} equivariantly on $X$ (by triangulating the boundary) and mapping to the Davis construction on $C$ (with respect to the boundary of $X$). This produces a $G$-action on a closed aspherical manifold $M$, with a pseudo-equivalence to the same construction on $C$. Since $C$ has a fixed point, the $G$-action on $\pi=\pi_1C=\pi_1X$ lifts to $\text{Aut}(\pi)$. On the other hand, the $G$-action on $M$ has no fixed point, because the original action on $X$ did not.
\end{proof}

It is an interesting problem whether the action constructed in the proof can exist on classical aspherical manifolds, e.g., hyperbolic manifolds. For odd order cyclic group (or the order is a power of $2$), \cite{we} shows the answer is negative.

\section{Obstruction Group}
\label{obstruction}

We prove Theorem \ref{obstruct}.

For a $G$-complex $Y$, let $N_Y\sub {\bb Z}^{\pi_0Y^G}$ be the collection of 
\[
\nu(g)
=(\chi(F_C)-\chi(C))_{C\in \pi_0Y^G},\quad
F_C=g^{-1}(C)\cap X^G,
\]
for all pseudo-equivalences $g\colon X\to Y$. We prove $N_Y$ is a group, by showing that it is closed under negative and addition operations.

For a pseudo-equivalence $g\colon X\to Y$, we construct its {\em negative} 
\[
\bar{g}\colon \bar{X}=Y\cup X\times[0,1]\cup Y\to Y
\]
to be the double mapping cylinder of $g$. Then $\bar{g}$ is still a pseudo-equivalence, with $\bar{F}_C=C\cup F_C\times[0,1]\cup C$, and $\chi(\bar{F}_C)-\chi(C)=-(\chi(F_C)-\chi(C))$. Therefore $\nu\in N_Y$ implies $-\nu\in N_Y$.

The negative construction has the following properties:
\begin{enumerate}
\item $\bar{X}$ contains a copy of $Y$, and the non-equivariant homotopy equivalence can be a homotopy retraction of $\bar{X}$ to $Y$.
\item $\bar{F}_C$ is connected. Therefore the connected components of the fixed sets of $\bar{X}$ and $Y$ are in one-to-one correspondence.
\end{enumerate}
We call a pseudo-equivalence with the two properties {\em retracting equivalence}. Since the double negative satisfies $\nu(\bar{\bar{g}})=\nu(g)$, every element in $N_Y$ is represented by a retracting equivalence. 

For two retracting equivalences $g_1\colon X_1\to Y$ and $g_2\colon X_2\to Y$, the {\em addition} $g_1\cup g_2\colon X_1\cup_Y X_2\to Y$ is still a retracting equivalence. It is also easy to see that $\nu(g_1\cup g_2)=\nu(g_1)+\nu(g_2)$. Therefore $\nu_1,\nu_2\in N_Y$ implies $\nu_1+\nu_2\in N_Y$.

This completes the proof that $N_Y$ is an abelian subgroup. Next, we prove that $N_Y$ is indeed the obstruction to pseudo-equivalence extension. 

By the definition of $N_Y$, if $f\colon F\to Y$ extends to a pseudo-equivalence $g\colon X\to Y$, such that $X^G=F$, then $(\chi(F_C)-\chi(C))_{C\in \pi_0Y^G}=\nu(g)\in N_Y$.

Conversely, suppose $f\colon F\to Y$ satisfies $(\chi(F_C)-\chi(C))_{C\in \pi_0Y^G}\in N_Y$. Then $(\chi(F_C)-\chi(C))_{C\in \pi_0Y^G}=\nu(g')$ for some a pseudo-equivalence $g'\colon X'\to Y$. This means $\chi(F_C)=\chi(F'_C)$, where $F'_C=g'^{-1}(Y^G_C)\cap X'^G$. 

As remarked earlier, we may further assume that $g'$ is a retracting pseudo-equivalence. Then we may regard $f$ as mapped into $Y\sub X'$. This means $f$ is a composition ($i\colon Y\to X'$ is the inclusion)
\[
f=g'\circ(i\circ f)\colon
F\xrightarrow{i\circ f}X'
\xrightarrow{g'} Y.
\]
Since $Y\sub X'$, and connected components of the fixed sets of $X'$ and $Y$ are in one-to-one correspondence, we have 
\[
C=Y\cap F'_C,\quad
F_C=f^{-1}(Y\cap F'_C)=(i\circ f)^{-1}(F'_C),
\]
and 
\[
\chi((i\circ f)^{-1}(F'_C))
=\chi(F_C)
=\chi(F'_C).
\]
By Theorem \ref{local}, this implies that $i\circ f$ has pseudo-equivalence extension $h\colon X\to X'$. Then $g'\circ h\colon X\to Y$ is a pseudo-equivalence extension of $f$. This completes the proof of Theorem \ref{obstruct}.

\section{The Role of Fundamental Group}
\label{fundgroup}

In this section, we develop the equivariant Euler-Wall characteristic of a $G$-complex that lie in $K_0(R[\Gamma])$ for rings $R$ in which the orders of isotropy groups are invertible. We apply this to get further restrictions on the Euler characteristics of components of fixed sets under pseudo-equivalences.

We first elaborate on the lifted $G$-actions to the universal cover that we use to define the weakly $G$-connected property in the introduction.

Let $p\colon \tilde{Y}\to Y$ be the universal cover, with free action by the fundamental group $\pi=\pi_1Y$. A $G$-action on $Y$ lifts to self homeomorphisms of $\tilde{Y}$. All the liftings form a group $\Gamma$ fitting into an exact sequence
\begin{equation}\label{eq1}
1\to \pi\to \Gamma\to G\to 1,
\end{equation}

Let $\tilde{y}\in \tilde{Y}$, $y=p(\tilde{y})$. The induced homomorphism $\Gamma_{\tilde{y}}\to G_y$ of isotropy groups is an isomorphism. If $y\in Y^G$, then we get a splitting $G=G_y\cong \Gamma_{\tilde{y}}\sub \Gamma$ of \eqref{eq1}. If $\tilde{y}$ and $\tilde{y}'$ are in the same connected component $\hat{C}$ of $p^{-1}(Y^G)$, then $\Gamma_{\tilde{y}}=\Gamma_{\tilde{y}'}$. Therefore we may denote $\Gamma_{\tilde{y}}=\Gamma_{\hat{C}}$, and the splitting $G\cong \Gamma_{\hat{C}}\sub \Gamma$ depends only on $\hat{C}$.

The component $\hat{C}$ covers a connected component $C$ of $Y^G$. The other connected components of $p^{-1}(C)$ are $a\hat{C}$, $a\in \pi$. Therefore a connected component $C$ of $Y^G$ gives a {\em $\pi$-conjugacy class of splittings} of \eqref{eq1}
\[
\Gamma_C
=\{\Gamma_{a\hat{C}}=a\Gamma_{\hat{C}}a^{-1}\colon a\in \pi\}.
\]

\begin{example}
The complex conjugation action of $G={\bb Z}_2$ on circle $Y=S^1$ has fixed point components $C_1=\{1\}$ and $C_{-1}=\{-1\}$. The universal cover is $p(t)=e^{it}\colon \tilde{Y}={\bb R}\to Y$. The group $\Gamma$ consists of $\sigma_n(t)=t+2n\pi$ (liftings of the identity, which form $\pi_1Y$) and $\rho_n(t)=2n\pi-t$ (liftings of the conjugation). We have 
\begin{align*}
p^{-1}(C_1)
&=\{2n\pi\colon n\in {\bb Z}\}, &
p^{-1}(C_{-1})
&=\{(2n+1)\pi\colon n\in {\bb Z}\}; \\
\hat{C}_1
&=\{0\}, &
\hat{C}_{-1}
&=\{\pi\}; \\
\Gamma_{\{2n\pi\}}
&=\{1,\rho_{2n}\}
=\sigma_1^n\Gamma_{\hat{C}_1}\sigma_1^{-n}, &
\Gamma_{\{(2n+1)\pi\}}
&=\{1,\rho_{2n+1}\}
=\sigma_1^n\Gamma_{\hat{C}_{-1}}\sigma_1^{-n}.
\end{align*}
We have two conjugate families of splittings 
\[
\Gamma_{C_1}=\{\Gamma_{\{2n\pi\}}\},\quad
\Gamma_{C_{-1}}=\{\Gamma_{\{(2n+1)\pi\}}\}.
\]
\end{example}

A splitting of \eqref{eq1} corresponds to a semi-direct product decomposition $\Gamma=\pi\rtimes G$. The $\pi$-conjugacy classes of splittings form the cohomology set $H^1(G;\pi)$ (not necessarily a group because $\pi$ may not be commutative). 

A $G$-map $g\colon X\to Y$ has a pullback $\tilde{g}\colon \tilde{X}\to \tilde{Y}$ along the universal cover $p\colon \tilde{Y}\to Y$. The map $\tilde{g}$ is a $\Gamma$-map, and induces a map of ${\bb Z}[\Gamma]$-chain complexes $\tilde{g}_*\colon C(\tilde{X})\to C(\tilde{Y})$. If $g$ is a pseudo-equivalence, then $\tilde{g}_*$ has a ${\bb Z}[\pi]$-chain homotopy inverse $\varphi$. 

Let $R$ be a ring, say the rational numbers ${\bb Q}$, such that $|G|$ is invertible in $R$. Then we may use one splitting $\Gamma=\pi\rtimes G$ to get a $R[\Gamma]$-chain map $\frac{1}{|G|}\sum_{u\in G}u\varphi\colon C(\tilde{Y})\to C(\tilde{X})$. This is a $R[\Gamma]$-chain homotopy inverse of 
\[
\tilde{g}_*\otimes R\colon C(\tilde{X})\otimes R\to C(\tilde{Y})\otimes R.
\]
In particular, $\tilde{g}_*\otimes R$ is a $R[\Gamma]$-chain homotopy equivalence.

Since $|G|$ is invertible in $R$, and the isotropy groups of the $\Gamma$-action are isomorphic to subgroups of $G$, we know $C(\tilde{X})\otimes R$ and $C(\tilde{Y})\otimes R$ consist of finitely generated projective $R[\Gamma]$-modules. Then the $R[\Gamma]$-chain complexes give the Euler characteristic elements $\chi_{\Gamma}(\tilde{X})$ and $\chi_{\Gamma}(\tilde{Y})$ in $K_0(R[\Gamma])$. The $R[\Gamma]$-chain homotopy equivalence $\tilde{g}_*\otimes R$ implies $\chi_{\Gamma}(\tilde{X})=\chi_{\Gamma}(\tilde{Y})$.

The $G$-cells $G\sigma$ of $Y$ are in one-to-one correspondence with $\Gamma$-cells $\Gamma\tilde{\sigma}$ of $\tilde{Y}$, where $\tilde{\sigma}$ is any cell of $\tilde{Y}$ over $\sigma$. The Euler characteristic of $C(\tilde{Y})\otimes R$ is 
\[
\chi_{\Gamma}(\tilde{Y})
=\sum_{\text{$G$-cells of $Y$}}(-1)^{\dim\sigma}[R[\Gamma\tilde{\sigma}]].
\]
Here $\Gamma\tilde{\sigma}=\Gamma/\Gamma_{\tilde{\sigma}}$ is a $\Gamma$-orbit, and $R[\Gamma\tilde{\sigma}]$ is a projective $R[\Gamma]$-module. For finite subgroup $H$, we know the {\em rank} of the projective $R[\Gamma]$-module $R[\Gamma/H]$ is ($(\gamma)$ is the conjugacy class of $\gamma$ in $\Gamma$)
\[
\text{rank}(R[\Gamma/H])
=\frac{1}{|H|}\sum_{h\in H}h
\in \oplus_{(\gamma)\sub \Gamma}R(\gamma).
\]
By $\chi_{\Gamma}(\tilde{X})=\chi_{\Gamma}(\tilde{Y})$, we have $\text{rank}(\chi_{\Gamma}(\tilde{X}))=\text{rank}(\chi_{\Gamma}(\tilde{Y}))$.

\begin{theorem}\label{cyclic}
Suppose $G$ is a cyclic group acting on a finite $G$-complex $Y$. Suppose $\gamma\in \Gamma$ is mapped to a generator of $G$, and $\langle\gamma\rangle$ is the cyclic subgroup generated by $\gamma$. If $g\colon X\to Y$ is a rational pseudo-equivalence, then 
\[
\sum_{\langle\gamma\rangle\in \Gamma_C}\chi(F_C)
=\sum_{\langle\gamma\rangle\in \Gamma_C}\chi(C).
\] 
The sum is over all components $C$ of $Y^G$ satisfying $\langle\gamma\rangle\in \Gamma_C$. 
\end{theorem}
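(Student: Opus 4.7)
The plan is to apply the Hattori--Stallings trace (the rank map) to the identity $\chi_\Gamma(\tilde X)=\chi_\Gamma(\tilde Y)$ in $K_0(\mathbb{Q}[\Gamma])$ established in the preceding discussion for any rational pseudo-equivalence, and then to read off the coefficient of the $\Gamma$-conjugacy class of $\gamma$ on both sides. The cell decomposition of $\chi_\Gamma(\tilde Y)$ combined with the formula $\mathrm{rank}(R[\Gamma/H])=\frac{1}{|H|}\sum_{h\in H}h$ expresses this coefficient as a sum over $G$-cells of $Y$, the weight at $\sigma$ being $(-1)^{\dim\sigma}$ times the fraction of elements of $\Gamma_{\tilde\sigma}$ that are $\Gamma$-conjugate to $\gamma$.

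The first step is to identify the contributing cells. A $G$-cell $\sigma$ contributes only when $\Gamma_{\tilde\sigma}$ meets the $\Gamma$-conjugacy class of $\gamma$; any such meeting element projects to a generator of $G$, forcing $\Gamma_{\tilde\sigma}\to G$ to be surjective, so $|\Gamma_{\tilde\sigma}|=|G|$ and $\sigma\subset Y^G$. The second step, where the cyclicity of $G$ is essential, is to show that for $\sigma\subset Y^G$ the intersection of $\Gamma_{\tilde\sigma}$ with the $\Gamma$-conjugacy class of $\gamma$ is nonempty if and only if $\Gamma_{\tilde\sigma}$ is a $\pi$-conjugate of $\langle\gamma\rangle$. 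The nontrivial direction uses $\langle\gamma\rangle$ itself as a splitting of $\Gamma\to G$: if $b\gamma b^{-1}\in\Gamma_{\tilde\sigma}$ for some $b\in\Gamma$, write $b=au$ with $a\in\pi$ and $u\in\langle\gamma\rangle$; since $u$ normalizes $\langle\gamma\rangle$, one gets $\Gamma_{\tilde\sigma}=\langle b\gamma b^{-1}\rangle=a\langle\gamma\rangle a^{-1}$, which says precisely that $\langle\gamma\rangle\in\Gamma_C$ for the component $C\subset Y^G$ containing $\sigma$. Each such cyclic $\Gamma_{\tilde\sigma}$ of order $|G|$ contains exactly one lift of the image of $\gamma$ in $G$, so the weight coming from $\sigma$ is $(-1)^{\dim\sigma}/|G|$. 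Summing cell-by-cell,
$$
|G|\cdot\bigl(\text{coefficient of }(\gamma)\text{ in }\mathrm{rank}(\chi_\Gamma(\tilde Y))\bigr)=\sum_{\substack{C\in\pi_0 Y^G\\ \langle\gamma\rangle\in\Gamma_C}}\chi(C),
$$
and the analogous identity holds with $Y$ replaced by $X$ and $C$ running over components $D$ of $X^G$ with $\langle\gamma\rangle\in\Gamma_D$.

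The third step is to match the $X$-side sum with $\sum\chi(F_C)$ by a brief isotropy-tracking argument. For a component $D$ of $X^G$ with $g(D)\subset C$, pick a lift $\tilde x\in\tilde X$ of any $x\in D$; $\Gamma$-equivariance of $\tilde g$ gives $\Gamma_{\tilde x}\subset\Gamma_{\tilde g(\tilde x)}$, and since both have order $|G|$ they coincide, so $\Gamma_D=\Gamma_C$. Hence $\langle\gamma\rangle\in\Gamma_D$ exactly when $\langle\gamma\rangle\in\Gamma_C$ for the unique $C$ containing $g(D)$, and $\sum_{g(D)\subset C}\chi(D)=\chi(F_C)$. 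Combining this with $\chi_\Gamma(\tilde X)=\chi_\Gamma(\tilde Y)$ yields the stated identity.

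I expect the main technical obstacle to be the clean execution of the second step: converting $\Gamma$-conjugacy of cyclic isotropies into $\pi$-conjugacy is precisely where the cyclicity of $G$ is used in an essential way, since it supplies the splitting $G\cong\langle\gamma\rangle$ that lets one absorb the non-$\pi$ part of any conjugating element. Everything else is a bookkeeping exercise that assembles the integer-valued equality of the theorem out of the $\mathbb{Q}[\Gamma]$-level identity.
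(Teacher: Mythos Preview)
Your proposal is correct and follows essentially the same route as the paper: apply the Hattori--Stallings rank to the identity $\chi_\Gamma(\tilde X)=\chi_\Gamma(\tilde Y)$ in $K_0(\mathbb{Q}[\Gamma])$, and read off the coefficient of the conjugacy class $(\gamma)$ using the cell decomposition and the formula $\mathrm{rank}(\mathbb{Q}[\Gamma/H])=\frac{1}{|H|}\sum_{h\in H}(h)$. Your second step (converting $\Gamma$-conjugacy into $\pi$-conjugacy via the splitting $b=au$) and your third step (matching isotropies along $\tilde g$ to identify the $X$-side sum with $\sum\chi(F_C)$) make explicit exactly the points the paper passes over with the phrases ``Therefore $\sigma$ is in a component $C$ of $Y^G$ satisfying $\langle\gamma\rangle\in\Gamma_C$'' and ``We have the same calculation for the pullback $\tilde X$''.
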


If $Y$ is weakly $G$-connected, then there is at most one $C$ satisfying $\langle\gamma\rangle\in \Gamma_C$, and the proposition says $\chi(F_C)=\chi(C)$ for each $C$. In other words, the component wise Euler characteristic condition in Theorem \ref{local} is necessary and sufficient. Therefore we have $N_Y=n_G{\bb Z}^{\pi_0Y^G}$.

\begin{proof}
Since $\gamma\in \Gamma$ is mapped to a generator of $G$, by the formula for $\text{rank}({\bb Q}[\Gamma\tilde{\sigma}])=\text{rank}({\bb Q}[\Gamma/\Gamma_{\tilde{\sigma}}])$, the conjugacy class $(\gamma)$ appears in $\text{rank}({\bb Q}[\Gamma\tilde{\sigma}])$ if and only if a conjugation of $\gamma$ fixes $\tilde{\sigma}$. Moreover, for such $\tilde{\sigma}$, we have
\[
\text{rank}({\bb Q}[\Gamma\tilde{\sigma}])
=\frac{1}{n}\sum_{i=0}^{n-1}(\gamma^i),\quad
n=|G|.
\]
The elements $\gamma^i$ are not $\Gamma$-conjugate because they are mapped to non-conjugate elements of the cyclic group $G$. Therefore the coefficient of $(\gamma)$ in $\text{rank}({\bb Q}[\Gamma\tilde{\sigma}])$ is $\frac{1}{n}$. 

The terms ${\bb Q}[\Gamma\tilde{\sigma}]$ of $C(\tilde{Y})\otimes {\bb Q}$ are in one-to-one correspondence with $G$-cells $G\sigma$ of $Y$. If a conjugation of $\gamma$ fixes $\tilde{\sigma}$, by $\gamma$ mapped to the generator of $G$, we know $G$ fixes $\sigma$. Therefore $\sigma$ is in a component $C$ of $Y^G$ satisfying $\langle\gamma\rangle\in \Gamma_C$. Conversely, any such $\sigma$ gives $\frac{1}{n}(\gamma)$ in the corresponding $\text{rank}({\bb Q}[\Gamma\tilde{\sigma}])$. Therefore the coefficient of $(\gamma)$ in 
\[
\text{rank}(\chi_{\Gamma}(\tilde{Y}))
=\sum_{\text{$G$-cells $G\sigma$ of $Y$}} (-1)^{\dim\sigma}\text{rank}({\bb Q}[\Gamma\tilde{\sigma}])
\]
is 
\[
\sum_{\text{$G$-cell $G\sigma$ of $C$ satisfying $\langle\gamma\rangle\in \Gamma_C$}}(-1)^{\dim\sigma}\frac{1}{n}
=\frac{1}{n}\sum_{\langle\gamma\rangle\in \Gamma_C}\chi(C).
\]
We have the same calculation for the pullback $\tilde{X}$, and find that the coefficient of $(\gamma)$ in $\text{rank}(\chi_{\Gamma}(\tilde{X}))$ is $\frac{1}{n}\sum_{\langle\gamma\rangle\in \Gamma_C}\chi(F_C)$. Then we conclude $\frac{1}{n}\sum_{\langle\gamma\rangle\in \Gamma_C}\chi(F_C)=\frac{1}{n}\sum_{\langle\gamma\rangle\in \Gamma_C}\chi(C)$.
\end{proof}

Next, we apply the idea to $G$ satisfying $n_G=0$. This means there is a normal subgroup $P$, such that $|P|=p^l$ for a prime $p$, and $G/P$ is cyclic of order $n$. We may further assume that $p$ and $n$ are coprime. The liftings of $P$-actions give an exact sequence
\begin{equation}\label{eq2}
1\to\pi\to \Pi\to P\to 1.
\end{equation}
Here $\Pi$ is the pre-image of $P$ under $\Gamma\to G$, and \eqref{eq2} is part of \eqref{eq1}.

Each connected component $C$ of $Y^G$ is contained in a connected component $D$ of $Y^P$. Then $p^{-1}(C)\sub p^{-1}(D)$, and each connected component $\hat{C}$ of $p^{-1}(C)$ is contained in a connected component $\hat{D}$ of $p^{-1}(D)$. The pair $(\hat{C},\hat{D})$ gives a pair of compatible splittings $G\cong \Gamma_{\hat{C}}\sub G$ and $P\cong \Gamma_{\hat{D}}\sub \Pi$ of \eqref{eq1} and \eqref{eq2}. All the pairs $(\hat{C},\hat{D})$ are related by $\pi$-translations, and the corresponding pairs of splittings are $\pi$-conjugate. Then we get a $\pi$-conjugacy class of compatible splittings (we fix one pair $(\hat{C},\hat{D})$ in the second expression)
\[
\Gamma_{CD}
=\{\text{all }(\Gamma_{\hat{C}},\Gamma_{\hat{D}})\}
=\{(a\Gamma_{\hat{C}}a^{-1},a\Gamma_{\hat{D}}a^{-1})\colon a\in \pi\}.
\]

\begin{theorem}\label{comp-wise}
Suppose $P$ is a normal $p$-subgroup of $G$, and $G/P$ is a cyclic group of order prime to $p$. Suppose $X,Y$ are finite $G$-complexes, and a $G$-map $g\colon X\to Y$ is a pseudo-equivalence. Then for connected components $C_0,D_0$ of $Y^G,Y^P$ satisfying $C_0\sub D_0$, we have
\[
\sum_{\Gamma_{CD_0}=\Gamma_{C_0D_0}}\chi(F_C)
=\sum_{\Gamma_{CD_0}=\Gamma_{C_0D_0}}\chi(C).
\]
The sum is over all components $C$ of $Y^G$ satisfying $\Gamma_{CD_0}=\Gamma_{C_0D_0}$. 
\end{theorem}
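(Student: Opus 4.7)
The plan is to adapt the Hattori-Stallings trace argument of Theorem \ref{cyclic}. Since $|G|=np^l$ is invertible in ${\bb Q}$, the rational chain complexes $C(\tilde X)\otimes{\bb Q}$ and $C(\tilde Y)\otimes{\bb Q}$ are finitely generated projective ${\bb Q}[\Gamma]$-complexes, and the pseudo-equivalence $g\colon X\to Y$ yields $\chi_\Gamma(\tilde X)=\chi_\Gamma(\tilde Y)$ in $K_0({\bb Q}[\Gamma])$. The desired identity will then be extracted by computing the coefficient of a carefully chosen conjugacy class $(\gamma)\sub\Gamma$ under the Hattori-Stallings rank.

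The element $\gamma$ is built from the splitting data $(\hat C_0,\hat D_0)$. By Schur-Zassenhaus applied to $1\to P\to G\to G/P\to 1$, we have $G=P\rtimes Q$ with $Q$ cyclic of order $n$. Via the isomorphism $\Gamma_{\hat C_0}\cong G$, I take $\gamma\in\Gamma_{\hat C_0}$ to be the lift of a generator of $Q$; then $\gamma$ has order $n$, its image $\bar g\in G$ generates $Q$, and the inclusion $\Pi_{\hat D_0}\sub\Gamma_{\hat C_0}$ (from $\hat C_0\sub\hat D_0$) records the compatible $P$-splitting. For each $G$-cell $G\sigma$ with lift $\tilde\sigma$, the contribution to the coefficient of $(\gamma)$ in $\text{rank}({\bb Q}[\Gamma/\Gamma_{\tilde\sigma}])$ is $(-1)^{\dim\sigma}|\Gamma_{\tilde\sigma}\cap(\gamma)|/|\Gamma_{\tilde\sigma}|$, and is nonzero precisely when some $\pi$-conjugate of $\gamma$ stabilizes $\tilde\sigma$. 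For $\sigma$ in a component $C$ of $Y^G$ with $\Gamma_{CD_0}=\Gamma_{C_0D_0}$ (i.e.\ whose $(\hat C,\hat D_0)$-splitting class is $\pi$-conjugate to $(\hat C_0,\hat D_0)$), this occurs and evaluates to $(-1)^{\dim\sigma}/|C_G(\bar g)|$ per orbit. Summing over such cells yields $\frac{1}{|C_G(\bar g)|}\sum_{\Gamma_{CD_0}=\Gamma_{C_0D_0}}\chi(C)$ on the $Y$-side, and analogously $\frac{1}{|C_G(\bar g)|}\sum\chi(F_C)$ on the $X$-side; the equality $\chi_\Gamma(\tilde X)=\chi_\Gamma(\tilde Y)$ then gives the theorem.

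The main obstacle is that $\bar g$ generates only $Q=G/P$, not all of $G$, so cells in $Y^{\bar g}\setminus Y^G$ with isotropy of the form $P'\rtimes\langle\bar g\rangle$ ($P'\subsetneq P$) can a priori also contribute to the coefficient of $(\gamma)$, unlike in Theorem \ref{cyclic} where the generator of $G$ forced the only relevant isotropy to be $G$ itself. I expect to handle these contributions via two observations. First, for a $Y^{\bar g}$-component $E$ disjoint from $Y^G$, no $\pi$-conjugate of $\gamma$ stabilizes any lift $\tilde\sigma$ over $E$, because $\gamma$'s fixed set in $\tilde Y$ contains $\hat C_0\sub\tilde Y^G$ and $E$ is not connected to $Y^G$ through $Y^{\bar g}$; so such $E$ do not contribute. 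Second, for $E$ meeting $Y^G$ in components $C_1,\ldots,C_k$, a Lefschetz-type argument using the cyclic $\langle\gamma\rangle\cong Q$-action on $\tilde Y^{\Pi_{\hat D_0}}$ (valid because $\gamma\in\Gamma_{\hat C_0}$ normalizes $\Pi_{\hat D_0}$) should show the contribution from $E\setminus Y^G$ cancels, leaving only the $Y^G$-contribution matching the stated splitting class. Making this cancellation precise, while carefully tracking the $\pi$-conjugacy classes of compatible splittings so that $\Gamma_{C_0D_0}$ is preserved under the reduction, is the technical heart of the proof and is what forces the appearance of $D_0$ in the statement.
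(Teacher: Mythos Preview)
Your approach is genuinely different from the paper's, and the obstacle you identify in your third paragraph is real --- but your proposed resolution does not work, and in fact the paper avoids this obstacle entirely by a different reduction.

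The paper does \emph{not} work over ${\bb Q}$ with the full group $\Gamma$. Instead it first applies Smith theory to the $p$-group $P$: the pseudo-equivalence $g$ restricts to an ${\bb F}_p$-homology equivalence $g^P\colon X^P\to Y^P$, hence a ${\bb Z}_{p^k}$-homology equivalence for every $k$. One then focuses on a single component $D_0$ of $Y^P$, on which the cyclic group $H=G/P$ acts, and runs the Hattori--Stallings argument of Theorem~\ref{cyclic} for $H$ acting on $\hat D_0$ with coefficients in $R={\bb Z}_{p^k}$ (where $|H|=n$ is invertible). Because $H$ is cyclic and $\gamma$ now maps to a \emph{generator} of $H$, only $H$-fixed cells of $D_0$ --- i.e.\ $G$-fixed cells of $Y$ inside $D_0$ --- contribute to the coefficient of $(\gamma)$, exactly as in Theorem~\ref{cyclic}. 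The resulting identity holds mod $p^k$ for all $k$, hence over ${\bb Z}$. The passage to $P$-fixed points is precisely what forces the component $D_0$ into the statement and eliminates the spurious contributions you are worried about.

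Your attempt to handle those contributions directly has gaps. The ``first observation'' asserts that if a component $E$ of $Y^{\bar g}$ is disjoint from $Y^G$, then no $\pi$-conjugate of $\gamma$ fixes any lift over $E$; but the fixed set $\tilde Y^{\gamma}$ has no reason to be connected or to project into a single component of $Y^{\bar g}$, so this is unjustified. (Also, the rank lives in $\bigoplus_{(\gamma)}R(\gamma)$ indexed by $\Gamma$-conjugacy classes, not $\pi$-conjugacy classes, so the bookkeeping must be done with $\Gamma$-conjugates.) The ``second observation'' invokes an unspecified Lefschetz-type cancellation; over ${\bb Q}$ there is no mechanism analogous to Smith theory to force the contributions from $Y^{\bar g}\setminus Y^G$ to match on the $X$ and $Y$ sides, since $\langle\bar g\rangle$ is not a $p$-group. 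The missing idea is exactly the Smith-theoretic reduction to $Y^P$.
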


The condition $\Gamma_{CD_0}=\Gamma_{C_0D_0}$ means the following. We fix one component $D$ (denoted $D_0$ in the proposition) of $Y^P$, and consider all the components $C$ of $Y^G$ that are contained in $D$. Then we further distinguish these $C$ by the conjugation classes of the associated splittings. The sum of Euler characteristics is over these conjugation class. 

If all components $C$ inside $D$ have non-conjugate splittings, then the sum is over single $C$, and we get $\chi(F_C)=\chi(C)$ for every $C$ inside $D$. Furthermore, suppose $Y$ has the property that, if components $C$ and $C'$ of $Y^G$ give conjugate splittings, then $C$ and $C'$ belong to different components of $Y^P$. Of course a weakly $G$-connected $Y$ has this property. Under this property, we get $\chi(F_C)=\chi(C)$ for every component $C$ of $Y^G$. 

\begin{proof}
The cyclic group $H=G/P$ acts on $Y^P$, and the group $\tilde{H}$ of liftings of $H$-actions to $p^{-1}(Y^P)$ fits into an exact sequence
\begin{equation}\label{eq3}
1\to\pi\to \tilde{H}\to H\to 1.
\end{equation}
By Smith theory \cite{cwy-jones,jones}, we know $g^P\colon X^P\to Y^P$ is an ${\bb F}_p[\tilde{H}]$-homology equivalence. This implies $g^P$ is a ${\bb Z}_{p^k}[\tilde{H}]$-homology equivalence for all $k$. 

The homology equivalence is a sum of homology equivalences on connected components \cite{cwy-jones}. Let $C,D,\hat{C},\hat{D}$ be given as in the discussion before the proposition. Since $G$ has fixed set $C$ in $D$, we know $H=G/P$ acts on $D$. Then $\hat{D}$ covers $D$, and the group $H_{\hat{D}}$ of liftings of $H$-actions to $\hat{D}$ fits into an exact sequence
\begin{equation}\label{eq4}
1\to\pi_{\hat{D}}\to H_{\hat{D}}\to H\to 1.
\end{equation}
Here $\pi_{\hat{D}}$ is the subgroup of translations $a\in \pi$ satisfying $a\hat{D}=\hat{D}$, and \eqref{eq4} is part of \eqref{eq3}. Let $\hat{F}_D$ be the pullback of $g^{-1}(D)\cap X^P\to D\leftarrow \hat{D}$, then the restriction of $g^P$ induces a ${\bb Z}_{p^k}[H_{\hat{D}}]$-chain homology equivalence 
\begin{equation}\label{eq5}
g^P_*\colon
C(\hat{F}_D)\otimes{\bb Z}_{p^k} \to 
C(\hat{D})\otimes{\bb Z}_{p^k}.
\end{equation}

We note that $\Gamma_{\hat{C}}/\Gamma_{\hat{D}}\cong H$. In fact, by $\Gamma_{\hat{C}}/\Gamma_{\hat{D}}\sub H_{\hat{D}}$, we have a splitting of \eqref{eq4}. For fixed $D,\hat{D}$, the other choices of $\hat{C}\sub\hat{D}$ over the same $C$ give $\pi_{\hat{D}}$-conjugations of the splitting. These conjugations are in one-to-one correspondence with the conjugations of the pair $(\Gamma_{\hat{C}},\Gamma_{\hat{D}})$. Therefore $\Gamma_{CD}$ is also the $\pi_{\hat{D}}$-conjugacy class of the splittings of \eqref{eq4}.

Let $\gamma\in \Gamma_{\hat{C}}/\Gamma_{\hat{D}}$ correspond to a generator of the cyclic group $H$. Since $p$ and $n$ are coprime, the order $n=|H|=|\Gamma_{\hat{C}}/\Gamma_{\hat{D}}|$ is invertible in the ring $R={\bb Z}_{p^k}$. Therefore both chain complexes in \eqref{eq5} consist of projective ${\bb Z}_{p^k}[H_{\hat{D}}]$-modules, and the homology equivalence is a ${\bb Z}_{p^k}[H_{\hat{D}}]$-chain homotopy equivalence. Then we may apply the same idea in the proof of Theorem \ref{cyclic}, with ${\bb Q}$ replaced by ${\bb Z}_{p^k}$, and get the similar conclusion. We fix $C_0,D_0,\hat{C}_0,\hat{D}_0$, and get the generator $\gamma\in \Gamma_{\hat{C}_0}/\Gamma_{\hat{D}_0}\sub H_{\hat{D}_0}$. Using the conjugacy class $\Gamma_{CD_0}$ explained above, we conclude
\[
\sum_{\langle\gamma\rangle\in \Gamma_{CD_0}}\chi(F_C)
=\sum_{\langle\gamma\rangle\in \Gamma_{CD_0}}\chi(C)\quad \text{mod $p^k$}.
\]
Here the equality is mod $p^k$ because it is an equality in ${\bb Z}_{p^k}$. We note that $\langle\gamma\rangle\in \Gamma_{CD_0}$ is the same as $\Gamma_{CD_0}=\Gamma_{C_0D_0}$. Moreover, the equality holds mod $p^k$ for all $k$ means the equality holds as integers.
\end{proof}

\medskip


\begin{thebibliography}{1}
%\baselineskip=2pc
%\addcontentsline{toc}{section}{References}

\bibitem{bartsch}
T.~Bartsch.
\newblock On the existence of Borsuk-Ulam theorems.
\newblock {\em Topology}, 31(3):533-543, 1992.

\bibitem{bass}
H.~Bass.
\newblock Euler characteristics and characters of discrete groups.
\newblock {\em Invent. Math.}, 35:155-196, 1976.

\bibitem{bh}
M.~Bridson., A.~Haefliger.
\newblock Metric Spaces of Non-Positive Curvature.
\newblock Springer-Verlag, Berlin, Heidelberg, 1999.

\bibitem{cwy-jones}
S.~Cappell, S.~Weinberger, M.~Yan.
\newblock Fixed points of semi-free $G$-CW-complex of given homotopy type.
\newblock {\em preprint}, 2021.

\bibitem{davis}
M.~Davis.
\newblock Groups generated by reflections and aspherical manifolds not covered by Euclidean space.
\newblock {\em Ann. of Math.}, 117(2):293-324, 1983.


\bibitem{dh}
M.~Davis, J.~C.~Hausmann.
\newblock Aspherical manifolds without smooth or PL structure.
\newblock LNM 1370, 135-142, 1986.


\bibitem{hattori}
A.~Hattori.
\newblock Rank element of a projective module.
\newblock {\em Nagoya J. Math.}, 25:113-120, 1965.

\bibitem{jones}
L.~Jones.
\newblock The converse to the fixed point theorem of P.A. Smith: I.
\newblock {\em Ann. of Math.}, 94(1):52-68, 1971.

\bibitem{mi}
M.~Morimoto, K.~Iizuka.
\newblock Extendability of $G$-maps to pseudo-equivalences to finite $G$-CW-complexes whose fundamental groups are finite.
\newblock {\em Osaka J. Math.}, 21:59-69, 1984.

\bibitem{oliver}
R.~Oliver.
\newblock Fixed-point sets of group actions on finite cyclic complexes.
\newblock {\em Comment. Math. Helvetici}, 50:155-177, 1975.

\bibitem{oliver2}
R.~Oliver.
\newblock Smooth compact Lie group actions on disks.
\newblock {\em Math. Z.}, 149:79-96, 1976.

\bibitem{oliver3}
R.~Oliver.
\newblock $G$-actions on disks and permutation representations.
\newblock {\em J. Algebra}, 50:44-62, 1978.

\bibitem{oliver4}
R.~Oliver.
\newblock $G$-actions on disks and permutation representations II.
\newblock {\em Math. Z.}, 157:237-263, 1977.

\bibitem{op}
R.~Oliver, T.~Petrie.
\newblock $G$-CW-surgery and $K_0({\bb Z}G)$.
\newblock {\em Math. Z.}, 179:11-42, 1982.

\bibitem{quinn}
F. Quinn.
\newblock Ends of maps II.
\newblock {\em Invent. Math.}, 68(3):353-424, 1982.


\bibitem{smith}
P. A.~Smith.
\newblock Fixed-Point theorems for periodic transformations.
\newblock {\em Amer. J. Math.}, 63(1):1-8, 1941.

\bibitem{stallings}
J.~Stallings.
\newblock Centerless groups - an algebraic formulation of Gottlieb's theorem.
\newblock {\em Topology}, 4:129-134, 1965.

\bibitem{we}
S.~Weinberger.
\newblock A fixed point theorem for periodic maps on locally symmetric manifolds. 
\newblock {\em Algebra i Analiz}, 29(1):60-69, 2017. reprinted in {\em St. Petersburg Math. J.}, 29(1):43-50, 2018.

\bibitem{west}
J. West.
\newblock 
Mapping Hilbert cube manifolds to ANR's: a solution of a conjecture of Borsuk.
\newblock {\em Ann. of Math.}, 106(2):1-18, 1977.


\end{thebibliography}
\end{document}